\newtheorem{theorem}{Theorem}[section]
\newtheorem{lemma}[theorem]{Lemma}
\newtheorem{corollary}[theorem]{Corollary}
\newtheorem{proposition}[theorem]{Proposition}
\newtheorem*{thma}{Theorem A}
\newtheorem*{thmdvo}{Theorem D}
\numberwithin{equation}{section}
\long\def\symbolfootnote[#1]#2{\begingroup%
\def\thefootnote{\fnsymbol{footnote}}\footnote[#1]{#2}\endgroup}
\begin{document}

\def\C{{\mathbb C}}
\def\N{{\mathbb N}}
\def\Z{{\mathbb Z}}
\def\R{{\mathbb R}}
\def\K{{\mathbb K}}
\def\E{{\cal E}}
\def\epsilon{\varepsilon}
\def\kappa{\varkappa}
\def\phi{\varphi}
\def\leq{\leqslant}
\def\geq{\geqslant}
\def\re{\text{\tt Re}\,}
\def\ilim{\mathop{\hbox{$\underline{\hbox{\rm lim}}$}}\limits}
\def\dim{\hbox{\tt dim}\,}
\def\ker{\hbox{\tt ker}\,}
\def\spann{\hbox{\tt span}\,}
\def\Re{\hbox{\tt Re}\,}
\def\ssub#1#2{#1_{{}_{{\scriptstyle #2}}}}

\title{The Kitai Criterion and backward shifts}

\author{Stanislav Shkarin}

\date{}

\maketitle

\begin{abstract} \noindent It is proved that for any separable infinite
dimensional Banach space $X$, there is a bounded linear operator $T$
on $X$ such that $T$ satisfies the Kitai Criterion. The proof is
based on quasisimilarity argument and on showing that $I+T$
satisfies the Kitai Criterion for certain backward weighted shifts
$T$.
\end{abstract}

\small \noindent{\bf MSC:} \ \ 47A16, 37A25

\noindent{\bf Keywords:} \ \ Hypercyclic operators, mixing
operators, the Kitai Criterion, biorthogonal sequences, backward
shifts, quasisimilarity \normalsize

\section{Introduction \label{s1}}\rm

All vector spaces are assumed to be over $\K$ being either the field
$\C$ of complex numbers or the field $\R$ of real numbers.
As\symbolfootnote[0]{Partially supported by Plan Nacional I+D+I
grant no. MTM2006-09060, Junta de Andaluc\'{\i}a FQM-260 and British
Engineering and Physical Research Council Grant GR/T25552/01.}
usual, $\Z$ is the set of integers, $\Z_+$ is the set of
non-negative integers and $\N$ is the set of positive integers. For
a Banach space $X$, symbol $L(X)$ stands for the space of bounded
linear operators on $X$ and $X^*$ is the space of continuous linear
functionals on $X$.

\medskip

\noindent{\bf Definition 1.} \ Let $X$ be a Banach space and $T\in
L(X)$. We say that $T$ is {\it hypercyclic} if there exists $x\in X$
such that the orbit $\{T^nx:n\in \Z_+\}$ is dense in $X$. Following
Ansari \cite{ansa}, $T$ is called {\it hereditarily hypercyclic} if
for any infinite set $\Lambda\subseteq \Z_+$, there exists $x\in X$
for which $\{T^nx:n\in\Lambda\}$ is dense in $X$.

\medskip

\noindent{\bf Remark.} \ Hypercyclic operators have been intensely
studied during last few decades, see surveys \cite{ge1,ge2} and
references therein. It is also worth noting that in the terminology
of \cite{bp,gri} hereditarily hypercyclic operators are called
hereditarily hypercyclic with respect to the sequence $n_k=k$ of all
non-negative integers.

\medskip

\noindent{\bf Definition 2.} \ We say that a bounded linear operator
$T$ on a Banach space $X$ satisfies the {\it Kitai Criterion}
\cite{kitai} if there exist two dense subsets $E$ and $F$ of $X$ and
a map $S:F\to F$ such that $TSy=y$, $S^k y\to 0$ and $T^{k}x\to 0$
as $k\to\infty$ for any $y\in F$ and $x\in E$.

\medskip

\noindent{\bf Definition 3.} \ Let $X$ be a topological space. A
continuous map $T:X\to X$ is called {\it mixing} if for any two
non-empty open sets $U,V\subseteq X$, $T^n(U)\cap V\neq \varnothing$
for all sufficiently large $n\in\N$.

\medskip

It is well-known that a bounded linear operator $T$ on a Banach
space is mixing if and only if it is hereditarily hypercyclic, see
\cite{gri}. Moreover, if $T$ satisfies the Kitai Criterion, then $T$
is mixing, see \cite{gri,CS}. Grivaux \cite{gri}, answering a
question raised by Shapiro, proved that for any separable infinite
dimensional Banach space $X$, there exists a mixing $T\in L(X)$,
that does not satisfy the Kitai criterion. We address the question
raised by Petersson (Miniworkshop: Hypercyclicity and Linear Chaos,
Oberwolfach, August, 2006), whether there is a bounded linear
operator satisfying the Kitai Criterion on any separable infinite
dimensional Banach space. The following theorem gives an affirmative
answer to this question.

\begin{theorem}\label{kitai}
Let $X$ be a separable infinite dimensional Banach space. Then there
exists $T\in L(X)$ satisfying the Kitai Criterion.
\end{theorem}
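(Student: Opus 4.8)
The plan is to solve the problem first on a convenient sequence-space model and then push the example onto an arbitrary $X$ by a quasisimilarity argument. Concretely, I would fix a model space $Y$ (a Hilbert or weighted $\ell^2$-type space with unit vectors $(\tilde e_n)_{n\in\Z_+}$) carrying a backward weighted shift $\tilde T$, defined by $\tilde T\tilde e_n=\omega_n\tilde e_{n-1}$ and $\tilde T\tilde e_0=0$, and show that $I+\tilde T$ satisfies the Kitai Criterion on $Y$. Then I would realise $X$, via a biorthogonal system, as a quasisimilar target of $(Y,I+\tilde T)$ and transport the Criterion.

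For the model step the natural tool is the eigenvector (Godefroy--Shapiro) mechanism, which is a clean sufficient condition for the Kitai Criterion. With the weights $\omega_n$ bounded above and below, the shift $\tilde T$ has, for each scalar $\lambda$ of small modulus, the eigenvector $v_\lambda=\sum_{n\ge 0}\frac{\lambda^n}{\omega_1\cdots\omega_n}\tilde e_n$, the series converging in $Y$ for all $\lambda$ in a disc of positive radius; hence $I+\tilde T$ acquires the eigenvalues $1+\lambda$. This disc of eigenvalues meets both $\{|z|<1\}$ and $\{|z|>1\}$, and, since $\lambda\mapsto v_\lambda$ is analytic, the spans of the eigenvectors lying over each of these two regions are dense in $Y$. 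On the first family the forward iterates act by $(I+\tilde T)^k v_\lambda=(1+\lambda)^k v_\lambda\to 0$, giving the dense set $E$; on the second family the map $S$ sending each eigenvector $v_\lambda$ to $(1+\lambda)^{-1}v_\lambda$ is a right inverse of $I+\tilde T$ with $S^k\to 0$, giving $F$ and $S$. This verifies the Kitai Criterion for $I+\tilde T$ on $Y$.

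For the transfer step I would use that every separable infinite dimensional Banach space carries a fundamental biorthogonal system $(e_n,f_n)_{n\in\Z_+}$ with $\overline{\spann}\{e_n\}=X$. I would then define a bounded injective operator $J\colon Y\to X$ with dense range by $J\tilde e_n=d_n e_n$, the scalars $d_n\ne 0$ decaying fast enough that $J$ is bounded (e.g. $\sum_n|d_n|\,\|e_n\|<\infty$), and set $T:=J\tilde TJ^{-1}$ on the dense range of $J$. The intertwining $J\tilde T=TJ$, hence $J(I+\tilde T)=(I+T)J$, is then automatic, and once $T$ is known to extend to a bounded operator on $X$ the Criterion transports verbatim: the dense sets $J(E),J(F)$ and the transported map $JSJ^{-1}$ satisfy $(I+T)(JSJ^{-1})=I$, while the relations $S^k y\to 0$ and $(I+\tilde T)^k x\to 0$ pass to $X$ by continuity of $J$. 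Equivalently, the images $u_\lambda:=Jv_\lambda$ are eigenvectors of $I+T$ for $1+\lambda$ with dense span, so Godefroy--Shapiro applies directly on $X$.

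The main obstacle is precisely the boundedness of $T=J\tilde TJ^{-1}$ on $X$: since $J$ is injective but not surjective, $J^{-1}$ is unbounded, and boundedness of the product is not formal. For a general, non-Hilbertian biorthogonal system the bare backward shift $e_n\mapsto e_{n-1}$ need not be bounded, and forcing boundedness by shrinking the shift weights would make the operator quasinilpotent and destroy exactly the eigenvectors the Criterion needs. The role of the quasisimilarity is to absorb this decay into the coefficients $d_n$ of $J$ while leaving $T$ bounded and retaining its eigenvalues $\lambda$ (so that $I+T$ keeps the eigenvalues $1+\lambda$). I therefore expect the real work to be the simultaneous choice of the weights $\omega_n$, the norm of $Y$, and the coefficients $d_n$ so that (i) the eigenvectors $v_\lambda$ exist and span densely over a disc, (ii) $J$ is bounded, injective and has dense range, and (iii) $T=J\tilde TJ^{-1}$ is bounded on all of $X$; step (iii), with norm estimates tuned to the geometry of the chosen biorthogonal system, is where the delicate calculation lies.
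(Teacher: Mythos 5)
Your overall skeleton (a weighted-shift model, a biorthogonal system on $X$, a diagonal-type intertwining $J$, and transport of the Kitai Criterion through the quasisimilarity $J\tilde T=TJ$) is exactly the paper's architecture, and both your model step on $Y$ and your transfer lemma are individually correct. The gap is the point you yourself flag at the end as ``where the delicate calculation lies'': it is not a delicate calculation, it is a provable impossibility for some $X$, and it sinks the eigenvector mechanism entirely. If $J\colon Y\to X$ is bounded and injective and $J\tilde T=TJ$ with $T\in L(X)$, then $T(Jv_\lambda)=J(\tilde Tv_\lambda)=\lambda Jv_\lambda$ and $Jv_\lambda\neq 0$, so \emph{every} $\lambda$ in your disc is an eigenvalue of $T$, whatever form $J$ takes; hence $\sigma(I+T)$ contains a disc around $1$. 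But if $X$ is hereditarily indecomposable, every bounded operator on $X$ is a scalar multiple of the identity plus a strictly singular operator and therefore has countable spectrum; equivalently, as quoted in Section~\ref{s3} of the paper, every hypercyclic operator on such an $X$ has one-point spectrum $\{\lambda\}$ with $|\lambda|=1$. So on an HI space your conditions (i), (ii), (iii) are jointly unsatisfiable: the decay you must put into the conjugated weights $a_n=\omega_n d_{n-1}/d_n$ to make $T$ bounded (unavoidable, since a general separable $X$ need not have a Schauder basis, and a bare biorthogonal system only supports summability-type conditions such as $a\in\ell_2(\N)$ in Lemma~\ref{wshi}) makes $T$ quasinilpotent and destroys exactly the eigenvectors you need. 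The tension you noticed cannot be resolved in your favor; no Godefroy--Shapiro-type argument can survive transport to an arbitrary separable Banach space.

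The paper's way out is to make the \emph{model} operator eigenvalue-free from the start and to prove the Kitai Criterion for it by a completely different device. The model is $I+S$ on $\ell_1$, where $S$ is the backward shift with weights $w_n=1/n$; then $S$ is quasinilpotent, $\sigma(I+S)=\{1\}$, and (consistently with Lemma~\ref{qn}, which gives $\E(I+S)\cap\E((I+S)^{-1})=\{0\}$) the two dense sets in the Criterion must be genuinely different. They are produced in Lemma~\ref{l-kit} by intertwining $S$ with the Volterra operator $V$ on $L_2[0,1]$: for a suitable $J^*\colon L_2[0,1]\to\ell_1$ with dense range one has $(I-2cS)^nJ^*f=J^*(I-cV)^nf$ and $(I+2cS)^{-n}J^*f=J^*(I+cV)^{-n}f$, and both tend to zero by the Laguerre-polynomial asymptotics of Lemma~\ref{volterra}. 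Only after this eigenvector-free model is in hand does the paper run the transfer you describe, via the Dvoretzky-based biorthogonal system of Proposition~\ref{biort}, Lemma~\ref{wshi} and the quasisimilarity Lemma~\ref{quasi} --- which now works precisely because there are no eigenvalues left to preserve.
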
\rm

This theorem fits into the following chain of results. Gerzog
\cite{ger} proved that there is a supercyclic continuous linear
operator on any separable infinite dimensional Banach space. Later
Ansari \cite{ansa1} and Bernal-Gonz\'ales \cite{bernal} showed
independently that for any separable infinite dimensional Banach
space $X$ there is a hypercyclic operator $T\in L(X)$. Finally, as
we have already mentioned, Grivaux \cite{gri} proved that there is a
mixing operator on any separable infinite dimensional Banach space.

Recall that a {\it backward weighted shift} on $\ell_p=\ell_p(\Z_+)$
for $1\leq p<\infty$ or $c_0=c_0(\Z_+)$ is the operator $T$ acting
on the canonical basis $\{e_n\}_{n=0}^\infty$ as follows: $Te_0=0$
and $Te_n=w_ne_{n-1}$ for $n\geq 1$, where $w=\{w_n\}_{n\in\N}$ is a
bounded sequence of non-zero numbers in $\K$. The proof of
Theorem~\ref{kitai} is based on the fact that operators $I+T$
satisfy the Kitai Criterion for certain backward weighted shifts
$T$. Namely, the following theorem holds.

\begin{theorem}\label{bws1} Let $X$ be either $\ell_p$ with
$1\leq p<\infty$ or $X=c_0$ and $T:X\to X$ be a backward weighted
shift with the weight sequence $\{w_n\}_{n\in\N}$. Assume also that
\begin{equation}\label{1/n}
\ilim_{n\to\infty} \biggl(n!\prod_{j=1}^n |w_j|\biggr)^{1/n}>0.
\end{equation}
Then $I+T$ satisfies the Kitai criterion. \end{theorem}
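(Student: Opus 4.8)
The plan is to verify the Kitai Criterion for the operator $A=I+T$ directly, using two explicitly constructed dense sets together with a right inverse that is defined only on a dense domain (note $A$ need not be invertible on all of $X$; e.g. for constant weights $A=I+B$ is not). Write $\beta_n=\prod_{j=1}^n w_j$ with $\beta_0=1$, and set $L=\ilim_{n\to\infty}\bigl(n!\prod_{j=1}^n|w_j|\bigr)^{1/n}>0$. The one and only way I intend to use hypothesis \eqref{1/n} is through the resulting bound $|\beta_n|\ge(L-\epsilon)^n/n!$ for all large $n$: it guarantees that for every $\mu\in(0,L)$ the scalars $\mu^n/(n!\,|\beta_n|)$ are dominated by $(\mu/(L-\epsilon))^n$ and hence are $p$-summable. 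This is precisely the room needed to manufacture honest vectors out of the (non-summable) formal eigenvectors $\sum_n \beta_n^{-1}\lambda^n e_n$ of $T$.

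Accordingly I would introduce the two families $x_\mu^{\pm}=\sum_{n\ge0}\frac{(\pm\mu)^n}{n!\,\beta_n}e_n$, $0<\mu<L$, which lie in $X$ by the previous paragraph. The computations are cleanest in the coordinates $d=(d_n)$ defined by $x=\sum_n \beta_n^{-1}d_n e_n$: in these coordinates $T$ becomes the unweighted shift $U\colon(d_n)\mapsto(d_{n+1})$, so that $A^k x$ corresponds to the binomial transform $((I+U)^k d)_n=\sum_{j\ge0}\binom{k}{j}d_{n+j}$, and $x_\mu^{\pm}$ corresponds to $d_n=(\pm\mu)^n/n!$. A right inverse $S$ of $A$ is then supplied, on sequences of this exponential type, by the convergent alternating series $(Sd)_n=\sum_{j\ge0}(-1)^j d_{n+j}$; one checks $(I+U)Sd=d$ termwise, that $S$ preserves the exponential-decay class, and that $S^k$ corresponds to $((I+U)^{-k}d)_n=\sum_{j\ge0}(-1)^j\binom{k+j-1}{j}d_{n+j}$. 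Taking $F$ to be the $S$-invariant linear span of $\{x_\mu^{+}\}$ and $E=\spann\{x_\mu^{-}\}$ then makes $S\colon F\to F$ and $ASy=y$ automatic.

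The crux — and what I expect to be the main obstacle — is the decay: I must show $A^k x\to 0$ on $E$ and $S^k y\to0$ on $F$, i.e. that the two binomial transforms of $(\pm\mu)^n/n!$ tend to $0$ in the norm of $X$. For $x_\mu^{-}$ the sum $\sum_j\binom{k}{j}(-\mu)^{n+j}/(n+j)!$ is genuinely oscillatory, and likewise $\sum_j(-1)^j\binom{k+j-1}{j}\mu^{n+j}/(n+j)!$ for $x_\mu^{+}$; in both cases the identity $\sum_j\binom{k}{j}z^j=(1+z)^k$ (resp.\ its negative-exponent analogue) turns the $n$-th coefficient into a contour integral such as $\frac{1}{2\pi i}\oint \frac{e^{-\mu\lambda}(\lambda+1)^k}{\lambda^{n+k+1}}\,d\lambda$, whose large-$k$ behaviour is governed by a saddle near $\lambda\sim i\sqrt{k/\mu}$ and is therefore (only) polynomially small, of Bessel type. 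A crude modulus bound grows like $e^{c\sqrt{k}}$, so the cancellation must be retained; the real work is to extract an estimate $|((I+U)^{\mp k}d)_n|\le A_n\rho_n(k)$ with $\sum_n (A_n/|\beta_n|)^p<\infty$ uniformly in $k$ and $\rho_n(k)\to0$, which then yields norm convergence by dominated summation. Here the factor $1/n!$ in $d_n$ is essential: it both places $x_\mu^{\pm}$ in $X$ and forces the faster-in-$n$ decay of the transforms that makes the $n$-sum summable against the $\sim n!$ growth of $1/|\beta_n|$.

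Finally, density of $E$ and $F$ is where the factor $1/n!$ pays off a second time. If $\psi=(\psi_n)\in X^{*}$ annihilates every $x_\mu^{-}$ (resp.\ every $x_\mu^{+}$), then the power series $\sum_n \frac{(\mp1)^n\psi_n}{n!\,\beta_n}\mu^n$ vanishes on the whole interval $(0,L)$; by \eqref{1/n} its coefficients are $O((L-\epsilon)^{-n})$, so it has radius of convergence at least $L$ and is a genuine analytic function, whence all its coefficients — and therefore all $\psi_n$ — vanish. Thus $E$ and $F$ are dense, and since $F$ contains the already dense family $\{x_\mu^{+}\}$, its $S$-invariant enlargement is dense as well. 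Assembling the pieces, $E$, $F$ and $S$ satisfy all the requirements of Definition 2 for the operator $A=I+T$, which proves the theorem; the argument applies verbatim to $X=c_0$, with $\ell_1$ replacing $\ell_{p}$ as the dual space in the density step.
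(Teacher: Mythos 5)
The parts of your proposal that you actually carry out are sound: hypothesis (\ref{1/n}) does place the vectors $x_\mu^{\pm}$ in $X$, the alternating series does give a right inverse of $I+T$ on the exponential-type class, and the Hahn--Banach/analytic-continuation argument for density of $E$ and $F$ is correct. But there is a genuine gap at exactly the step you yourself label ``the real work'': nothing in the proposal proves that $\|(I+T)^k x_\mu^-\|\to 0$ and $\|S^k x_\mu^+\|\to 0$, and this is the analytic core of the theorem. What you offer is a saddle-point heuristic for a single coefficient. Note that your coefficients are classical objects: $\sum_{j\geq 0}\binom{k}{j}\frac{(-\mu)^{n+j}}{(n+j)!}=(-\mu)^n\frac{k!}{(n+k)!}L_k^{(n)}(\mu)$, where $L_k^{(n)}$ is the generalized Laguerre polynomial of degree $k$ and order $n$. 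Fixed-order Szeg\H{o} asymptotics give $L_k^{(n)}(\mu)=O(k^{n/2-1/4})$ as $k\to\infty$, so each coordinate of $(I+T)^kx_\mu^-$ does tend to $0$; but to get \emph{norm} convergence you must dominate these coordinates, uniformly in $k$, by a sequence that is $p$-summable against the growth of $1/|\beta_n|$ --- that is, you need cancellation estimates that are uniform in the order $n$. The crude modulus bound grows like $e^{c\sqrt{k}}$ (as you note yourself), so domination cannot come from the triangle inequality, and uniform-in-order Laguerre asymptotics are a delicate classical matter, not a routine computation. Writing down the desired bound $|((I+U)^{\mp k}d)_n|\leq A_n\rho_n(k)$ with the required summability is precisely assuming what has to be proved, so the theorem is not established by the proposal as written.

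It is instructive that the paper's proof is organized specifically to avoid this uniformity problem. It first reduces, via the diagonal quasisimilarity $Je'_n=d_ne_n$ with $d_n=c^n/(n!\,\beta_n)$ (bounded thanks to (\ref{1/n})), to the single model operator $I+cS$, where $S$ is the backward shift with weights $1/n$ on $\ell_1$; it then intertwines this model with the Volterra operator on $L_2[0,1]$ and proves the decay of $(I-cV)^nf$ and $(I+cV)^{-n}f$ by a bootstrap: explicit Laguerre asymptotics are invoked only for the single function $\mathbf{1}$, where $(I-cV)^n\mathbf{1}(t)=L_n(ct)$ has \emph{fixed} order; the decay then propagates to all polynomials through the commutation $p=q(V)\mathbf{1}$, and to all of $L_2[0,1]$ by power-boundedness of $I-cV$ (it is similar to the contraction $(I+cV)^{-1}$). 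One explicit orbit, plus commutation, plus power-boundedness, plus density replaces the uniform-in-$n$ estimates that your direct attack requires. To complete your argument along your own lines you would have to either prove such uniform Laguerre bounds or build an analogous bootstrap from a single vector; without one of these, the decay step remains open.
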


It is worth noting that if there exists $c>0$ such that $|w_n|\geq
c/n$ for each $n\in\N$, then (\ref{1/n}) is satisfied. Thus, we have
the following corollary.

\begin{corollary}\label{bws2} Let $X$ be either $\ell_p$ with
$1\leq p<\infty$ or $X=c_0$ and $T:X\to X$ be a backward weighted
shift with the weight sequence $\{w_n\}_{n\in\N}$. Assume also that
there exists $c>0$ such that $|w_n|\geq c/n$ for each $n\in\N$. Then
$I+T$ satisfies the Kitai criterion. \end{corollary}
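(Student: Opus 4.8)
The plan is to deduce this corollary directly from Theorem~\ref{bws1} by verifying that the hypothesis $|w_n|\geq c/n$ forces the liminf condition (\ref{1/n}) to hold. Since Theorem~\ref{bws1} already establishes that (\ref{1/n}) is sufficient for $I+T$ to satisfy the Kitai criterion, no new analysis of the shift operator itself is required; everything reduces to an elementary estimate on the weighted products $n!\prod_{j=1}^n|w_j|$.

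First I would use the pointwise lower bound on the weights to estimate the product in (\ref{1/n}) from below. From $|w_j|\geq c/j$ for every $j\in\N$ it follows that
\begin{equation*}
\prod_{j=1}^n |w_j| \geq \prod_{j=1}^n \frac{c}{j} = \frac{c^n}{n!},
\end{equation*}
and consequently the factorial cancels exactly, yielding
\begin{equation*}
n!\prod_{j=1}^n |w_j| \geq c^n \qquad \text{for all } n\in\N.
\end{equation*}
Taking $n$-th roots and passing to the lower limit then gives
\begin{equation*}
\ilim_{n\to\infty}\biggl(n!\prod_{j=1}^n |w_j|\biggr)^{1/n} \geq c > 0,
\end{equation*}
so that condition (\ref{1/n}) is satisfied. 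An application of Theorem~\ref{bws1} immediately yields that $I+T$ satisfies the Kitai criterion.

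There is essentially no obstacle here: the entire content of the corollary is the observation that the bound $|w_n|\geq c/n$ is calibrated precisely so that the factorial $n!$ in (\ref{1/n}) cancels against the product $\prod_{j=1}^n|w_j|$, leaving a quantity bounded below by the constant $c^n$. The only point deserving a word of care is to take the lower limit \emph{after} the $n$-th root, but since the estimate $n!\prod_{j=1}^n|w_j|\geq c^n$ holds for \emph{every} $n$, the bounded-below conclusion is uniform and the liminf poses no difficulty. The corollary is thus a clean illustration of the scope of Theorem~\ref{bws1}, showing that weights decaying no faster than $c/n$ still produce operators $I+T$ satisfying the Kitai criterion.
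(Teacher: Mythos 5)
Your proof is correct and is exactly the argument the paper intends: the bound $|w_j|\geq c/j$ gives $n!\prod_{j=1}^n|w_j|\geq c^n$, so condition (\ref{1/n}) holds with lower limit at least $c$, and Theorem~\ref{bws1} applies. The paper states this implication without writing out the computation, so your proposal simply fills in the same one-line estimate.
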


Recall that according to a theorem by Salas \cite{sal} the operator
$I+T$ is hypercyclic for any backward weighted shift $T$ on $\ell_p$
with $1\leq p<\infty$. Grivaux \cite{gri} has observed that
basically the same proof allows to show that the operators $I+T$ in
the Salas theorem are mixing. Theorem~\ref{bws1} shows that under
certain conditions on the weight sequence these operators satisfy
the Kitai criterion.

Theorems~\ref{kitai} and~\ref{bws1} are proved in Sections~\ref{s2}.
Section~\ref{s3} is devoted to concluding remarks.

\section{Operators satisfying the Kitai Criterion \label{s2}}

In this section we shall prove Theorems~\ref{kitai} and~\ref{bws1}.
We need some preparation.

\subsection{Biorthogonal sequences \label{s2-1}}\rm

We are going to prove the existence of biorthogonal sequences with
certain properties on any separable infinite dimensional Banach
space. Recall that a family $\{(x_\alpha,f_\alpha):\alpha\in A\}$,
where $x_\alpha$ are vectors in a Banach space $X$ and $f_\alpha\in
X^*$, is called {\it biorthogonal} if
$f_\alpha(x_\beta)=\delta_{\alpha,\beta}$ for any $\alpha,\beta\in
A$, $\delta_{\alpha,\beta}$ being the Kronecker delta, that is
$\delta_{\alpha,\beta}=1$ if $\alpha=\beta$ and
$\delta_{\alpha,\beta}=0$ if $\alpha\neq\beta$.

\begin{proposition}\label{biort} Let $X$ be a separable infinite
dimensional Banach space, $\{b_k\}_{k\in\Z_+}$ be sequence of
numbers in $[3,\infty)$ such that $b_k\to\infty$ as $k\to\infty$ and
$\{n_k\}_{k\in\Z_+}$ be a strictly increasing sequence of positive
integers such that $n_0=0$ and $n_{k+1}-n_k\geq 2$ for each
$k\in\Z_+$. Then there exists a biorthogonal sequence
$\{(x_k,f_k)\}_{k\in\Z_+}$ in $X\times X^*$ such that
\begin{itemize}
\item[\rm(B1)]$\|x_k\|=1$ for each $k\in\Z_+;$
\item[\rm(B2)]$\spann\{x_k:k\in\Z_+\}$ is dense in $X;$
\item[\rm(B3)]$\|f_{n_k}\|\leq b_k$ for each $k\in\Z_+;$
\item[\rm(B4)]$\|f_j\|\leq 3$ if
$j\in\Z_+\setminus\{n_k:k\in\Z_+\};$
\item[\rm(B5)]for any $k\in\Z_+$ and any
numbers $c_{j}\in\K$ with $n_k+1\leq j\leq n_{k+1}-1,$
$$
\frac12\left\|\sum_{j=n_k+1}^{n_{k+1}-1}c_jx_j\right\|\leq
\left(\sum_{j=n_k+1}^{n_{k+1}-1}|c_j|^2\right)^{1/2}\!\!\!\!\!\leq
2\left\|\sum_{j=n_k+1}^{n_{k+1}-1}c_jx_j\right\|.
$$
\end{itemize}
\end{proposition}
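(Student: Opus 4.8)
The plan is to build the biorthogonal sequence by blocks, where block $k$ occupies indices $n_k+1,\dots,n_{k+1}-1$ plus the single "heavy" index $n_k$ (whose functional is allowed the large norm $b_k$), and to construct each block by induction on $k$ using a standard finite-dimensional perturbation argument. The key tool is that in any infinite-dimensional Banach space one can find, inside any finite-codimensional subspace, unit vectors together with norming-type functionals of controlled norm; the small bounds $3$ in (B4) and the factor-$2$ estimates in (B5) are exactly what a single application of such a lemma (together with a basic-sequence / Auerbach-type selection) will yield on a finite block.

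First I would fix a countable dense set $\{z_m\}$ in $X$ to guarantee the density requirement (B2): at each stage I will throw the next not-yet-covered $z_m$ into the span, which is harmless since I am free to enlarge the blocks only in the sense of choosing the vectors, not the preassigned indices $n_k$. Then I would proceed inductively. Suppose $x_0,\dots,x_{n_k}$ and $f_0,\dots,f_{n_k}$ have been chosen so that biorthogonality holds and the norm bounds are met so far. To construct the next block, I work inside the finite-codimensional subspace $Y=\bigcap_{i\le n_k}\ker f_i$, which still contains a copy of any finite-dimensional behaviour I need. Within $Y$ I select vectors $x_{n_k+1},\dots,x_{n_{k+1}-1}$ that are $2$-equivalent to the $\ell_2$ basis (giving (B5)) and then, by the Hahn–Banach theorem applied on the finite-dimensional span, choose functionals $f_j$ that are biorthogonal to these within the block and whose norms can be taken close to the reciprocal of the $\ell_2$-type constant, hence $\le 3$ as in (B4). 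The single heavy vector $x_{n_{k+1}}$ (a new "head" of the next block) together with its functional $f_{n_{k+1}}$ is then adjoined; here the functional must be biorthogonal to \emph{all} previously chosen vectors, which forces its norm to grow, and the bound $\|f_{n_{k+1}}\|\le b_{k+1}$ is arranged using $b_k\to\infty$ — the growing budget is precisely what absorbs the cost of the increasingly many orthogonality constraints.

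The technical engine is a lemma of the form: given unit vectors $u_1,\dots,u_m$ and functionals $g_1,\dots,g_m$ already biorthogonal with $\|g_i\|\le M$, and given one more prescribed direction, one can find a unit vector $x$ annihilated by all $g_i$ and a functional $f$ with $f(x)=1$, $f(u_i)=0$, and $\|f\|$ bounded in terms of $m$ and $M$; I would obtain the $\ell_2$-equivalence in (B5) by choosing the block vectors as a suitably perturbed basic sequence inside $Y$, invoking that every infinite-dimensional Banach space contains basic sequences with basis constant arbitrarily close to $1$. Normalizing to $\|x_k\|=1$ delivers (B1) throughout.

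The main obstacle I anticipate is controlling the functional norms simultaneously across \emph{two} scales: the in-block functionals must stay below the absolute constant $3$ while the head functionals are permitted to blow up only as fast as $b_k$. The danger is that making $x_{n_{k+1}}$ biorthogonal to the whole earlier system could contaminate the block estimates or force the in-block functionals above $3$. The clean way around this is to keep the blocks genuinely independent by always choosing new vectors inside the kernel of all earlier functionals, so that adding later vectors never disturbs earlier biorthogonality relations and the factor-$2$ estimates in (B5) are a purely intra-block statement; the only place the accumulated constraints bite is the head functional, and that is exactly where the unbounded budget $b_k\to\infty$ (with $b_k\ge 3$) is available to absorb them.
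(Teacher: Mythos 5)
There is a genuine gap, and it sits exactly at the point you wave away with the phrase ``the growing budget is precisely what absorbs the cost.'' In your induction the head vector $x_{n_{k+1}}$ is not free: to secure (B2) it must, up to a scalar, be the normalized remainder $r/\|r\|$, where $r=u-\sum_{j\le N}f_j(u)x_j$ is what is left of the next dense vector $u$ after subtracting its components along the vectors already built. Any functional equal to $1$ at this head and vanishing on the earlier $x_j$ then has norm $\|r\|/\mathrm{dist}\bigl(r,\spann\{x_0,\dots,x_N\}\bigr)$ on the relevant span, a quantity dictated by the geometry of $u$ relative to what you have already constructed. The only a priori bound on it is of the order of $\sum_{j\le N}\|f_j\|$ (via the projection $\sum_j f_j(\cdot)x_j$), and since that sum includes all \emph{previous} head functionals, the bound compounds multiplicatively from block to block. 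The budget $b_k$, by contrast, is prescribed in advance and may tend to infinity arbitrarily slowly, so no induction of this shape can guarantee $\|f_{n_{k+1}}\|\le b_{k+1}$; postponing the capture of $u$ does not help, because $\mathrm{dist}(u,\cdot)$ only shrinks as the span grows. This is the actual difficulty of the proposition, and the paper resolves it by \emph{not} controlling the head functionals at all: Lemma~\ref{bi1} adjoins them with no norm bound whatsoever, each block is built one index longer than needed (the offset $m_k=n_k+k$), creating an infinite reserve of ``cheap'' functionals of norm at most $3$, and only after the whole sequence $\{(y_j,g_j)\}$ is constructed does the combinatorial Lemma~\ref{seq} produce a bijection of $\Z_+$ sending every expensive functional to some slot $n_k$ whose budget $b_k$ exceeds its (finite but uncontrolled) norm, with all remaining slots filled from the cheap reserve. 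Since a permutation of a biorthogonal system is still biorthogonal and a subset of a block still satisfies (B5), the permuted system proves the proposition. Some such decoupling of the construction from the preassigned budgets is indispensable, and it is absent from your proposal.

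A second, smaller gap: you propose to obtain (B5) from basic sequences with basis constant close to $1$. That does not suffice: the canonical basis of $\ell_1$ is $1$-basic, yet $m$ of its elements fail to be $2$-equivalent to the $\ell_2$ basis as soon as $m>4$, and the block lengths $n_{k+1}-n_k-1$ may be unbounded. The uniform constant $2$ in (B5) requires Dvoretzky's theorem on almost spherical sections (Theorem~D), applied inside a suitable subspace. Moreover, to keep the in-block functionals below the absolute constant $3$ after forcing them to vanish on the old vectors, the block must be chosen almost orthogonal to the old span --- this is the role of the net of norming functionals $\phi_j$ in Lemma~\ref{bi2}; taking the block merely inside the kernels of the old functionals gives only a bound involving their norms, which, as explained above, are unbounded.
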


The proof of the above proposition is based on the following
theorem, which is known as Dvoretzky theorem on almost spherical
sections \cite{Dv}.

\begin{thmdvo} For each $n\in\N$ and each $\epsilon>0$,
there exists $m=m(n,\epsilon)\in\N$ such that for any Banach space
$X$ with $\dim X\geq m$ there is an $n$-dimensional linear subspace
$L$ in $X$ and a linear basis $e_1,\dots,e_n$ in $L$ for which
$\|e_j\|=1$ for $1\leq j\leq n$ and
$$
\frac1{1+\epsilon}\biggl\|\sum_{j=1}^nc_je_j\biggr\|\leq
\biggl(\sum_{j=1}^n|c_j|^2\biggr)^{1/2}\!\!\!\!\!\leq (1+\epsilon)
\biggr\|\sum_{j=1}^nc_je_j\biggr\|\ \ \text{for any
$(c_1,\dots,c_n)\in\K^n$}.
$$
\end{thmdvo}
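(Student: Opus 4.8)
The statement is Dvoretzky's theorem on almost spherical sections, and the plan is to prove it by Milman's concentration-of-measure method. First I would reduce to the finite-dimensional setting: since $\dim X\geq m$, it suffices to find the required subspace inside any fixed $m$-dimensional subspace of $X$, so I may assume $X=\R^m$ equipped with the given norm $\|\cdot\|$ (the complex case follows by viewing $\C^n$ as a real space of dimension $2n$ and producing a complex-invariant almost-Euclidean subspace, a routine modification I would handle at the end). The heart of the matter is to equip $\R^m$ with a well-chosen auxiliary Euclidean norm $|\cdot|$ and to control how the function $x\mapsto\|x\|$ behaves on the Euclidean unit sphere $S^{m-1}$.

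Second, I would fix the Euclidean structure using John's maximal-volume ellipsoid, normalizing so that $|x|\ge \|x\|$ for all $x$, with equality at the contact points; this makes $x\mapsto\|x\|$ a $1$-Lipschitz function on $(S^{m-1},|\cdot|)$, so its Lipschitz constant $b=\sup_{|x|=1}\|x\|$ equals $1$. Writing $M=\int_{S^{m-1}}\|x\|\,d\sigma$ for the average of the norm over the rotation-invariant probability measure $\sigma$, I would then invoke L\'evy's isoperimetric inequality on the sphere: a $1$-Lipschitz function concentrates sharply around its mean, $\sigma(\{x:|\,\|x\|-M\,|>tM\})\le C\exp(-c\,m\,t^2M^2)$.

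Third, to pass from a single value to a whole $n$-dimensional subspace I would use a net-and-union-bound argument. Fix an $n$-dimensional Euclidean subspace $E$ and a $\delta$-net $\mathcal N$ of its unit sphere with $|\mathcal N|\le(3/\delta)^n$; for a Haar-random rotation $U\in O(m)$ the concentration estimate together with the union bound shows that, with positive probability, $\big|\,\|Uy\|-M\,\big|\le tM$ holds simultaneously for all $y\in\mathcal N$, provided $n\log(3/\delta)\le c\,m\,t^2M^2$. A standard successive-approximation argument then upgrades closeness on the net to the two-sided estimate $\frac{1}{1+\epsilon}\|w\|\le M|w|\le(1+\epsilon)\|w\|$ for every $w$ in the rotated subspace $L=U(E)$, once $\delta,t$ are chosen small in terms of $\epsilon$. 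Rescaling a Euclidean orthonormal basis of $L$ so that its vectors have $\|\cdot\|$-norm one (and shrinking the internal tolerance to absorb this renormalization) produces the basis $e_1,\dots,e_n$ with the asserted inequality.

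The main obstacle, and the only genuinely deep point, is guaranteeing that the feasibility condition $n\log(3/\delta)\le c\,m\,t^2M^2$ can be met for some finite $m=m(n,\epsilon)$, uniformly over all norms. With a careless Euclidean structure the ratio $M/b$ can be as small as $m^{-1/2}$ (as for $\ell_\infty^m$), which would only admit $n$ of constant size. What rescues the argument is precisely the John position: using the contact points $u_i$ one has $\|x\|\ge\langle x,u_i\rangle$ for each $i$, hence $\|x\|\ge\max_i\langle x,u_i\rangle$, and combining John's decomposition $\sum_i c_i\,u_i\otimes u_i=I$ with a Gaussian maximum estimate yields the uniform lower bound $M\ge c\sqrt{(\log m)/m}$. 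This forces the admissible dimension to grow like $c(\epsilon)\log m$, so choosing $m$ exponentially large in $n/\epsilon^2$ makes $n$ admissible and completes the proof.
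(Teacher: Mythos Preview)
Your outline is a correct sketch of the Figiel--Lindenstrauss--Milman proof of Dvoretzky's theorem via concentration of measure, and the identification of the Dvoretzky--Rogers type lower bound $M\ge c\sqrt{(\log m)/m}$ in John's position as the crucial step is accurate. However, there is nothing to compare: in the paper Theorem~D is not proved but merely quoted from the literature (with a citation to Dvoretzky's original paper) and used as a black box inside the proof of Lemma~2.4. The paper's contribution lies elsewhere, so supplying a proof of Theorem~D goes beyond what the paper does; your sketch is sound, but it is additional material rather than a reconstruction of anything in the text.
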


We also need the following technical lemmas.

\begin{lemma}\label{seq}
Let $A$ be an infinite countable set and $\{a_n\}_{n\in A}$ be a
sequence of non-negative numbers such that there exists $c>0$ for
which the set $\{n\in A:a_n\leq c\}$ is infinite and
$\{b_n\}_{n\in\Z_+}$ be a sequence of numbers in $[c,\infty)$ such
that $b_n\to\infty$ as $n\to\infty$. Then there exists a bijection
$\pi:\Z_+\to A$ such that $a_{\pi(n)}\leq b_n$ for each $n\in\Z_+$.
\end{lemma}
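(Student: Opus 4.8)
The plan is to construct $\pi$ by a back-and-forth induction that alternately guarantees surjectivity onto $\Z_+$ (every position is used) and surjectivity onto $A$ (every element is hit), while maintaining the inequality $a_{\pi(n)}\le b_n$ at the moment each value is defined. Fix an enumeration $A=\{\alpha_0,\alpha_1,\dots\}$ and put $B=\{n\in A:a_n\le c\}$, which is infinite by hypothesis. Two elementary observations drive the construction: since $b_n\ge c$ for every $n$, any element of $B$ may legally be placed at any position; and since $b_n\to\infty$, for each threshold $t$ the set $\{n\in\Z_+:b_n\ge t\}$ is cofinite, hence infinite.

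First I would build $\pi$ stagewise as a growing partial injection from a finite subset of $\Z_+$ into $A$. At an even stage I pick the least position $n$ not yet in the domain and set $\pi(n)$ equal to any so-far-unused element of $B$; this is possible because $B$ is infinite while only finitely many elements have been used, and the requirement $a_{\pi(n)}\le c\le b_n$ then holds automatically. At an odd stage I take the element $\alpha_i$ of least index not yet in the range, choose any not-yet-used position $n$ with $b_n\ge a_{\alpha_i}$ (available because $\{n:b_n\ge a_{\alpha_i}\}$ is infinite and only finitely many positions have been used), and set $\pi(n)=\alpha_i$.

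To see that this yields a genuine bijection, I would track the least unfilled position and the least unused index. At each even stage the least unfilled position gets filled, so it strictly increases across even stages and never decreases; hence it tends to infinity and every position of $\Z_+$ is eventually assigned. Symmetrically, at each odd stage the least unused index is consumed, so it tends to infinity and every $\alpha_i$ eventually enters the range. Injectivity is immediate since no position or element is ever reused. Thus $\pi$ is a bijection of $\Z_+$ onto $A$, and since $a_{\pi(n)}\le b_n$ was imposed as each value was defined, it holds for all $n$.

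The only point genuinely requiring care is verifying that a legal choice always exists at every stage, and this is exactly where the two hypotheses enter: the infinitude of $B$ supplies arbitrarily many \emph{small} elements for the position-filling stages, while $b_n\to\infty$ supplies arbitrarily many \emph{large-capacity} positions for the element-placing stages. Everything else is the routine bookkeeping inherent to the back-and-forth method, so I expect no substantial obstacle.
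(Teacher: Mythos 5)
Your proof is correct, and it takes a genuinely different route from the paper's, although both rest on the same two observations you isolate: elements with $a_n\le c$ fit at every position (since $b_n\ge c$), and positions of capacity at least any given threshold form a cofinite set (since $b_n\to\infty$). The paper assembles the bijection in one shot rather than by back-and-forth: it splits $A$ into two disjoint infinite sets $B\cup C=A$ with $a_n\le c$ on $B$, fixes a bijection $\phi:\Z_+\to C$, and then uses $b_n\to\infty$ to choose a sparse sequence of positions $j_k$ with $j_{k+1}-j_k\ge 2$ and $a_{\phi(k)}\le b_{j_k}$; the element $\phi(k)$ is sent to position $j_k$, and the remaining positions $D=\Z_+\setminus\{j_k\}$ (infinite precisely because of the gap condition) are matched bijectively with $B$. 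The explicit construction buys a short verification with no stagewise bookkeeping, but it must be careful to reserve infinitely many positions for $B$ — that is exactly the role of the condition $j_{k+1}-j_k\ge 2$. Your alternating construction gets this reservation for free, since even stages always fill the least unfilled position and odd stages always place the least unplaced element, at the cost of tracking a growing partial injection and arguing that both the domain and the range are exhausted in the limit. Both arguments are complete; yours is the more routine application of a standard method, the paper's is the more economical write-up.
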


\begin{proof} Since the set $\{n\in A:a_n\leq c\}$
is infinite, we can choose two disjoint infinite subsets $B$ and $C$
of $A$ such that $a_{n}\leq c$ for each $n\in B$ and $B\cup C=A$.
Fix a bijection $\phi:\Z_+\to C$. Since $b_n\to\infty$ as
$n\to\infty$, we can choose a sequence $\{j_k\}_{k\in\Z_+}$ of
non-negative integers such that $j_{k+1}-j_k\geq 2$ and
$a_{\phi(k)}\leq b_{j_k}$ for each $k\in\Z_+$. Denote
$D=\Z_+\setminus\{j_k:k\in\Z_+\}$. Since $j_{k+1}-j_k\geq 2$, the
set $D$ is infinite. Since both $B$ and $D$ are infinite and
countable, there exists a bijection $\psi:D\to B$. Now consider the
map $\pi:\Z_+\to A$ defined by the formula
$$
\pi(n)=\left\{\begin{array}{ll}\phi(k)&\text{if $n=j_k$},\\
\psi(n)&\text{if $n\in D$}.\end{array}\right.
$$
Clearly $\pi$ is a bijection. If $n\in D$, we have
$a_{\pi(n)}=a_{\psi(n)}\leq c\leq b_{n}$. Indeed, $a_m\leq c$ for
$m\in B$ and $c\leq b_n$ for $n\in\Z_+$. If $n=j_k$, we have
$a_{\pi(n)}=a_{\phi(k)}\leq b_{j_k}=b_{n}$. Thus, $a_{\pi(n)}\leq
b_n$ for each $n\in\Z_+$.
\end{proof}

\begin{lemma}\label{bi1}
Let $X$ be an infinite dimensional Banach space, $u\in X$ and
$\{(x_k,f_k):1\leq k\leq n\}$ be a finite biorthogonal sequence in
$X\times X^*$. Then there exist $x_{n+1}\in X$ and $f_{n+1}\in X^*$
such that $\|x_{n+1}\|=1$, $\{(x_k,f_k):1\leq k\leq n+1\}$ is
biorthogonal and $u\in\spann\{x_1,\dots,x_{n+1}\}$.
\end{lemma}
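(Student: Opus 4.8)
The plan is to build the new pair by first choosing $x_{n+1}$ and then producing $f_{n+1}$ by a Hahn--Banach argument. First I would form the residual
$$
v=u-\sum_{k=1}^n f_k(u)\,x_k,
$$
and observe that biorthogonality $f_j(x_k)=\delta_{j,k}$ forces $f_j(v)=0$ for every $j\leq n$; that is, $v$ lies in $N:=\bigcap_{j=1}^n\ker f_j$. The role of $v$ is twofold: it records the part of $u$ not already captured by $x_1,\dots,x_n$, and it is automatically annihilated by all the old functionals.

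Next I would choose $x_{n+1}$. If $v\neq0$, set $x_{n+1}=v/\|v\|$; then $u=\sum_{k=1}^n f_k(u)x_k+\|v\|x_{n+1}\in\spann\{x_1,\dots,x_{n+1}\}$ and $x_{n+1}\in N$. If $v=0$, then $u\in\spann\{x_1,\dots,x_n\}$ already, so the span condition comes for free, and I only need a unit vector $x_{n+1}\in N$; such a vector exists because $N$ has codimension at most $n$ in the infinite dimensional space $X$ and is therefore nonzero (indeed infinite dimensional). In either case $x_{n+1}$ is a unit vector lying in $N$, so $f_j(x_{n+1})=0=\delta_{j,n+1}$ for all $j\leq n$, which is exactly the required orthogonality of the old functionals to the new vector.

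It then remains to produce $f_{n+1}$. The key point is that $x_{n+1}\notin M:=\spann\{x_1,\dots,x_n\}$: any $w\in M\cap N$ may be written $w=\sum_{k=1}^n a_kx_k$, and applying $f_j$ gives $a_j=f_j(w)=0$ for each $j\leq n$ because $w\in N$, whence $w=0$, while $\|x_{n+1}\|=1$. Since $M$ is finite dimensional, $M\oplus\K x_{n+1}$ is a finite dimensional subspace on which the linear functional vanishing on $M$ and equal to $1$ at $x_{n+1}$ is well defined and bounded; extending it to all of $X$ by the Hahn--Banach theorem yields $f_{n+1}\in X^*$ with $f_{n+1}|_M=0$ and $f_{n+1}(x_{n+1})=1$. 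Hence $f_{n+1}(x_k)=\delta_{n+1,k}$ for all $k\leq n+1$, so $\{(x_k,f_k):1\leq k\leq n+1\}$ is biorthogonal, completing the construction.

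I expect the only real subtlety to be arranging $x_{n+1}$ to satisfy the span requirement and the orthogonality to the old functionals simultaneously; the residual $v=u-\sum_k f_k(u)x_k$ resolves this, since it lies in $\bigcap_j\ker f_j$ while also expressing $u$ in the enlarged span. The degenerate case $v=0$ is precisely where infinite dimensionality of $X$ is used, to guarantee $N\neq\{0\}$, and the final Hahn--Banach extension is routine once $x_{n+1}\notin M$ has been checked.
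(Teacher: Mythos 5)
Your proof is correct and follows essentially the same route as the paper: choose a unit vector $x_{n+1}$ in $N=\bigcap_{j=1}^n\ker f_j$ capturing the part of $u$ outside $\spann\{x_1,\dots,x_n\}$, then extend the obvious functional on the finite dimensional span by Hahn--Banach. The only difference is presentational: the paper invokes the decomposition $X=L\oplus N$ and picks $x_{n+1}$ in one stroke, whereas you make the same projection explicit via the residual $v=u-\sum_k f_k(u)x_k$ and handle the degenerate case $v=0$ separately, which is a slightly more detailed writing of the identical argument.
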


\begin{proof} Since $X=L\oplus N$, where $L=\spann\{x_1,\dots,x_n\}$
and $N=\bigcap\limits_{k=1}^n\ker f_j$, we can pick a vector
$x_{n+1}\in N$ such that $\|x_{n+1}\|=1$ and $u\in
M=\spann(L\cup\{x_{n+1}\})=\spann\{x_1,\dots,x_{n+1}\}$. Since
$x_1,\dots,x_{n+1}$ is a linear basis in $M$, there exists a unique
linear functional $g:M\to \K$ such that $g(x_j)=0$ for $1\leq j\leq
n$ and $g(x_{n+1})=1$. According to the Hahn--Banach theorem, there
exists $f_{n+1}\in X^*$ such that $f_{n+1}\bigr|_{M}=g$. Clearly the
pair $(x_{n+1},f_{n+1})$ satisfies all required conditions.
\end{proof}

\begin{lemma}\label{bi2}
Let $m\in\N$, $X$ be an infinite dimensional Banach space and
$\{(x_k,f_k):1\leq k\leq n\}$ be a finite biorthogonal sequence in
$X\times X^*$. Then there exist $x_{n+1},\dots,x_{n+m}\in X$ and
$f_{n+1},\dots,f_{n+m}\in X^*$ such that $\{(x_k,f_k):1\leq k\leq
n+m\}$ is biorthogonal, $\|x_k\|=1$ and $\|f_k\|\leq 3$ for $n+1\leq
k\leq n+m$ and
\begin{equation}\label{dr}
\frac12\biggl\|\sum_{j=n+1}^{n+m}c_jx_j\biggr\|\leq
\biggl(\sum_{j=n+1}^{n+m}|c_j|^2\biggr)^{1/2}\!\!\!\!\!\leq
2\biggl\|\sum_{j=n+1}^{n+m}c_jx_j\biggr\|\ \ \text{for any
$(c_{n+1},\dots,c_{n+m})\in\K^m$}.
\end{equation}\end{lemma}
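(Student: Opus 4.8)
The plan is to combine the Dvoretzky theorem with an ``almost orthogonality'' argument, so that the new block of $m$ vectors is simultaneously almost Euclidean and positioned almost orthogonally to $\spann\{x_1,\dots,x_n\}$. The second feature is what keeps the norms of the new functionals below $3$ independently of the (possibly enormous) norms of the given $f_1,\dots,f_n$. Throughout put $V=\spann\{x_1,\dots,x_n\}$ and $N=\bigcap_{k=1}^n\ker f_k$; since $N$ is closed of finite codimension, it is infinite dimensional.

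First I would fix small parameters $\epsilon,\delta>0$ and produce a finite codimensional subspace almost orthogonal to $V$. Take a finite $\epsilon$-net $v_1,\dots,v_N$ of the unit sphere of $V$, pick norming functionals $g_i\in X^*$ with $\|g_i\|=1$ and $g_i(v_i)=1$, and set $Z=N\cap\bigcap_{i=1}^N\ker g_i$, again of finite codimension. A one-line estimate (for unit $v\in V$ choose $v_i$ with $\|v-v_i\|\le\epsilon$ and use $g_i$) gives $\|v+z\|\ge(1-\epsilon)\|v\|$ for all $v\in V$ and $z\in Z$; in particular $V\cap Z=\{0\}$, and for $w\in Z$, $u\in V$ one deduces $\|w\|\le\|u+w\|+\|u\|\le\frac{2-\epsilon}{1-\epsilon}\|u+w\|$.

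Next I would apply the Dvoretzky theorem inside the infinite dimensional space $Z$, with parameter $\delta$, to get an $m$-dimensional subspace $L\subseteq Z$ with a basis $e_1,\dots,e_m$, $\|e_j\|=1$, satisfying $\frac{1}{1+\delta}\|\sum c_je_j\|\le(\sum|c_j|^2)^{1/2}\le(1+\delta)\|\sum c_je_j\|$. Set $x_{n+j}=e_j$ for $1\le j\le m$. Since $\delta\le1$ this inequality is sharper than (\ref{dr}), so (\ref{dr}) holds; and because $L\subseteq Z\subseteq N$ we already have $f_k(x_{n+j})=0$ for $k\le n$, which is one half of the required biorthogonality.

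Finally I would construct the functionals. On $M=V\oplus L=\spann\{x_1,\dots,x_n,e_1,\dots,e_m\}$ define $h_i$ linearly by $h_i(x_k)=0$ ($k\le n$) and $h_i(e_j)=\delta_{ij}$; writing a general element as $u+w$ with $u\in V$ and $w=\sum c_je_j\in L$ gives $h_i(u+w)=c_i$. The upper Dvoretzky inequality yields $|c_i|\le(1+\delta)\|w\|$, and the almost orthogonality yields $\|w\|\le\frac{2-\epsilon}{1-\epsilon}\|u+w\|$, so that $\|h_i\|_{M^*}\le(1+\delta)\frac{2-\epsilon}{1-\epsilon}$. As $\epsilon,\delta\to0$ this bound tends to $2$, hence for $\epsilon,\delta$ small enough (for instance $\epsilon=\delta=\frac15$ gives $2.7$) it is at most $3$; extending each $h_i$ to $f_{n+i}\in X^*$ by the Hahn--Banach theorem preserves the norm, and then $\|f_{n+i}\|\le3$, $f_{n+i}(x_k)=0$ for $k\le n$, and $f_{n+i}(x_{n+j})=\delta_{ij}$ complete the biorthogonality. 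The main obstacle is precisely this norm bound: a direct Hahn--Banach extension of the coordinate functionals of $L$ (or an extension through the projection onto $N$ along $V$) would introduce a factor equal to that projection's norm, which depends uncontrollably on the given system. The almost orthogonality step removes this dependence, and the fact that the unavoidable distortion is $2<3$ is exactly what leaves room to absorb the Dvoretzky constant.
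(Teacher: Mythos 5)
Your proposal is correct and follows essentially the same route as the paper's proof: a finite net of the unit sphere of $\spann\{x_1,\dots,x_n\}$ with norming functionals to create an almost-orthogonal finite-codimensional subspace, Dvoretzky's theorem applied inside it, the estimate $\|h_i\|\leq(1+\delta)\frac{2-\epsilon}{1-\epsilon}$ for the coordinate functionals, and a norm-preserving Hahn--Banach extension. The only cosmetic difference is that you use two parameters $\epsilon,\delta$ with explicit values, whereas the paper uses a single $\delta\in(0,1/2)$ chosen so that $\frac{(1+\delta)(2-\delta)}{1-\delta}<3$.
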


\begin{proof} Fix $\delta\in(0,1/2)$ such that
$\frac{(1+\delta)(2-\delta)}{1-\delta}<3$ and let
$L=\spann\{x_1,\dots,x_n\}$, $S=\{x\in L:\|x\|=1\}$. Since $L$ is
finite-dimensional, $S$ is a compact metric space with respect to
the metric inherited from $X$. Hence we can find a finite set
$\{u_1,\dots,u_r\}\subset S$ such that for each $x\in S$ there
exists $j\in\{1,\dots,r\}$ for which $\|x-u_j\|<\delta$. According
to the Hahn--Banach theorem, there exist functionals
$\phi_1,\dots,\phi_r\in X^*$ such that $\|\phi_j\|=\phi_j(u_j)=1$
for $1\leq j\leq r$. Consider the space
$$
M=\bigcap_{l=1}^n\ker f_j\ \cap\ \bigcap_{j=1}^r \ker \phi_j.
$$
Since $X$ is infinite dimensional, $M$ is a closed infinite
dimensional subspace of $X$. By Theorem~D there exist
$x_{n+1},\dots,x_{n+m}\in M$ such that $\|x_j\|=1$ for $n+1\leq
j\leq n+m$ and
\begin{equation}\label{dr1}
\frac1{1+\delta}\biggl\|\sum_{j=n+1}^{n+m}c_jx_j\biggr\|\leq
\biggl(\sum_{j=n+1}^{n+m}|c_j|^2\biggr)^{1/2}\!\!\!\!\!\leq
(1+\delta) \biggl\|\sum_{j=n+1}^{n+m}c_jx_j\biggr\|\ \ \text{for any
$(c_{n+1},\dots,c_{n+m})\in\K^m$}.
\end{equation}
Since $\delta\leq1/2$, (\ref{dr1}) implies (\ref{dr}).

Let now $L_0=\spann\{x_{n+1},\dots,x_{n+m}\}$. Since $L_0\subset M$
and the linear functionals $f_j$ for $1\leq j\leq n$ vanish on $M$,
we see that vectors $x_1,\dots,x_{n+m}$ form a linear basis in
$L\oplus L_0$. Thus, there exist unique linear functionals
$g_{n+1},\dots,g_{n+m}$ on $L\oplus L_0$ such that
$g_j(x_k)=\delta_{j,k}$ for $n+1\leq j\leq n+m$ and $1\leq k\leq
n+m$. We shall estimate the norms of $g_j$.

First, let $y=\sum_{j=n+1}^{n+m}c_jx_j\in L_0$ and $n+1\leq k\leq
n+m$. Using (\ref{dr1}), we obtain
$$
|g_k(y)|=|c_k|\leq \biggl(\sum_{j=n+1}^{n+m}|c_j|^2\biggr)^{1/2}
\!\!\!\!\!\leq (1+\delta)\|y\|.
$$
Thus,
\begin{equation}\label{gk}
|g_k(y)|\leq (1+\delta)\|y\|\quad \text{for each $y\in L_0$.}
\end{equation}
Let now $x\in L$ with $\|x\|=1$ and $y\in L_0$. Pick
$j\in\{1,\dots,r\}$ such that $\|x-u_j\|\leq \delta$. Then
$$
\|x+y\|\geq |\phi_j(x+y)|=|\phi_j(x)|\geq
|\phi_j(u_j)|-|\phi_j(x-u_j)|\geq 1-\delta.
$$
It follows that $\|x+y\|\geq (1-\delta)\|x\|$ for each $x\in L$ and
$y\in L_0$. Consequently
\begin{equation}\label{gk1}
\|y\|\leq \|x\|+\|x+y\|\leq\biggl(1+\frac1{1-\delta}\biggr)\|x+y\|=
\frac{2-\delta}{1-\delta}\|x+y\| \quad \text{for each $x\in L$ and
$y\in L_0$.}
\end{equation}
Let now $w\in L\oplus L_0$. Then there exist unique $x\in L$ and
$y\in L_0$ for which $w=x+y$. Applying (\ref{gk}) and (\ref{gk1}),
we obtain
$$
|g_k(w)|=|g_k(x+y)|=|g_k(y)|\leq (1+\delta)\|y\|\leq
\frac{(1+\delta)(2-\delta)}{1-\delta}\|w\|.
$$
That is, $\|g_k\|\leq \frac{(1+\delta)(2-\delta)}{1-\delta}\leq 3$
for $n+1\leq k\leq n+m$. Using the Hahn--Banach theorem, we can now
choose $f_k\in X^*$ for $n+1\leq k\leq n+m$ such that
$f_k\bigr|_{L\oplus L_0}=g_k$ and $\|f_k\|\leq 3$. It remains to
observe that $\{(x_k,f_k):1\leq k\leq n+m\}$ is a biorthogonal
sequence satisfying all desired conditions. \end{proof}

\begin{proof}[Proof of Proposition~\ref{biort}]
Since $X$ is separable, we can choose a sequence
$\{u_k\}_{k\in\Z_+}$ of vectors in $X$ such that
$\spann\{u_k:k\in\Z_+\}$ is dense in $X$. Denote $m_k=n_k+k$ for
$k\in\Z_+$. First, according to the Hahn--Banach theorem, we can
pick $(y_0,g_0)\in X\times X^*$ such that $\|y_0\|=\|g_0\|=1$ and
$u_0\in\spann\{y_0\}$, which will serve as the basis of an inductive
process. Now we shall show inductively that for $k=0,1,\dots$, there
exist $\{(y_j,g_j):m_k<j\leq m_{k+1}\}\subset X\times X^*$ such that
\begin{itemize}
\item[(P1)]$\{(y_j,g_j):0\leq j\leq m_{k+1}\}$ is a biorthogonal
sequence;
\item[(P2)]$\|y_j\|=1$ for $m_k<j\leq m_{k+1}$;
\item[(P3)]$u_{k+1}\in\spann\{y_0,\dots,y_{m_{k+1}}\}$;
\item[(P4)]$\|g_j\|\leq 3$ for $m_k<j<m_{k+1}$;
\item[(P5)]for any
$(c_{m_k+1},\dots,c_{m_{k+1}-1})\in\K^{m_{k+1}-m_k-1}$,
$$
\frac12\left\|\sum_{j=m_k+1}^{m_{k+1}-1}c_jy_j\right\|\leq
\left(\sum_{j=m_k+1}^{m_{k+1}-1}|c_j|^2\right)^{1/2}\!\!\!\!\!\leq
2\left\|\sum_{j=m_k+1}^{m_{k+1}-1}c_jy_j\right\|.
$$
\end{itemize}
Suppose $(y_j,g_j)$ for $j\leq m_k$ satisfying the desired
properties are already constructed. Using Lemma~\ref{bi2}, we find
$(y_j,g_j)$ for $m_k<j<m_{k+1}$ in $X\times X^*$ such that
$\{(y_j,g_j):0\leq j<m_{k+1}\}$ is a biorthogonal sequence,
$\|y_j\|=1$ for $m_k<j<m_{k+1}$ and the conditions (P4) and (P5) are
satisfied. By Lemma~\ref{bi1}, we can now choose
$(y_{m_{k+1}},g_{m_{k+1}})\in X\times X^*$ such that
$\|y_{m_{k+1}}\|=1$ and the conditions (P1) and (P3) are satisfied.
Therefore $\{(y_j,g_j):m_k<j\leq m_{k+1}\}$ satisfy (P1--P5).

Thus we have just defined an inductive procedure, which provides a
sequence $\{(y_j,g_j):j\in\Z_+\}$ of elements of $X\times X^*$
satisfying
\begin{itemize}
\item[(Q1)]$\{(y_j,g_j):j\in\Z_+\}$ is a biorthogonal
sequence;
\item[(Q2)]$\|y_j\|=1$ for $j\in\Z_+$;
\item[(Q3)]$\spann\{u_j:j\in\Z_+\}\subseteq \spann\{y_j:j\in\Z_+\}$;
\item[(Q4)]$\|g_j\|\leq 3$ for $m_k<j<m_{k+1}$, $k\in\Z_+$;
\item[(Q5)]for any $k\in\Z_+$ and
$(c_{m_k+1},\dots,c_{m_{k+1}-1})\in\K^{m_{k+1}-m_k-1}$,
$$
\frac12\left\|\sum_{j=m_k+1}^{m_{k+1}-1}c_jy_j\right\|\leq
\left(\sum_{j=m_k+1}^{m_{k+1}-1}|c_j|^2\right)^{1/2}\!\!\!\!\!\leq
2\left\|\sum_{j=m_k+1}^{m_{k+1}-1}c_jy_j\right\|.
$$
\end{itemize}

Now we shall show that a biorthogonal sequence with the desired
properties can be obtained as a permutation of the biorthogonal
sequence $\{(y_j,g_j)\}$. Let
$$
A=B\cup C,\quad \text{where}\quad B=\{m_k:k\in\Z_+\}\quad
\text{and}\quad C=\{m_k+1:k\in\Z_+\}.
$$
Clearly $A$ is an infinite countable set. According to (Q4)
$\|g_j\|\leq 3$ if $j\in B$. By Lemma~\ref{seq}, there exists a
bijection $\phi:\Z_+\to A$ such that $\|g_{\phi(j)}\|\leq b_j$ for
each $j\in\Z_+$. Now we consider the map $\pi:\Z_+\to\Z_+$ defined
by the formula
$$
\pi(j)=\left\{\begin{array}{ll} \phi(k)&\text{if $j=n_k$},\\
m_k+l+1&\text{if $j=n_k+l$, $1\leq
l<n_{k+1}-n_k$.}\end{array}\right.
$$
Using the equalities $m_k=n_k+k$ and $m_{k+1}-m_k=n_{k+1}-n_k+1$, it
is easy to verify that $\pi$ maps bijectively
$\Z_+\setminus\{n_k:k\in\Z_+\}$ onto $\Z_+\setminus A$. Since
$\phi:\Z_+\to A$ is also bijective, we have that $\pi:\Z_+\to\Z_+$
is a bijection. Thus, $\{(x_j,f_j):j\in\Z_+\}$ with
$(x_j,f_j)=(y_{\pi(j)},g_{\pi(j)})$ is just a permutation of the
biorthogonal sequence $\{(y_j,g_j):j\in\Z_+\}$. Hence
$\{(x_j,f_j):j\in\Z_+\}$ is a also a biorthogonal sequence and the
properties (B1) and (B2) follow immediately from (Q2) and (Q3).
Since $\pi(\{n_k+1,\dots,n_{k+1}-1\})=\{m_k+2,\dots,m_{k+1}-1\}$ for
each $k\in\Z_+$, properties (B4) and (B5) follow from (Q4) and (Q5)
respectively. Finally $\|f_{n_k}\|=\|g_{\phi(k)}\|\leq b_k$ for each
$k\in\Z_+$ and (B3) is also satisfied.
\end{proof}

We need Proposition~\ref{biort} for one purpose only. Namely, we
need it in order to prove the following lemma which is one of the
two main ingredients in the proof of Theorem~\ref{kitai}.

\begin{lemma}\label{wshi} Let $a\in\ell_2(\N)$ and $T_a$ be the
weighted backward shift on $\ell_1$ with the weight sequence $a$,
that is, $T_ae_0=0$ and $T_ae_n=a_ne_{n-1}$ for $n\geq 1$, where
$\{e_n\}_{n\in\Z_+}$ is the canonical basis in $\ell_1$. Let also
$X$ be a separable infinite dimensional Banach space. Then there
exist $T\in L(X)$ and an injective bounded linear operator
$J:\ell_1\to X$ with dense range such that $JT_a=TJ$.
\end{lemma}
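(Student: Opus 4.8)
The plan is to construct $J$ directly from a biorthogonal sequence supplied by Proposition~\ref{biort}, tailoring the parameters of that proposition to the given $a\in\ell_2(\N)$, and to let the required intertwining relation dictate the definition of $T$. Since $a\in\ell_2$, both the tail $t_n=\sum_{j>n}|a_j|^2$ and $a_n$ tend to $0$, so I would first pick a strictly increasing sequence $\{n_k\}_{k\in\Z_+}$ with $n_0=0$ and $n_{k+1}-n_k\geq 2$ growing so fast that $t_{n_k}\leq 4^{-k}$ and $|a_{n_k}|\leq 4^{-k}$ for $k\geq1$, and then set $b_k=\max(3,2^k)$, so that $b_k\to\infty$ and $b_k\geq3$. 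Feeding $\{b_k\}$ and $\{n_k\}$ into Proposition~\ref{biort} produces a biorthogonal sequence $\{(x_k,f_k)\}_{k\in\Z_+}$ satisfying (B1)--(B5). I would then define $J\colon\ell_1\to X$ by $Je_n=x_n$. Because $\|x_n\|=1$, for $v=\sum_nc_ne_n\in\ell_1$ the series $\sum_nc_nx_n$ converges absolutely with $\|Jv\|\leq\sum_n|c_n|=\|v\|$, so $J$ is bounded with $\|J\|\leq1$; applying the continuous functional $f_k$ gives $f_k(Jv)=c_k$, whence $J$ is injective; and $\mathrm{ran}\,J\supseteq\spann\{x_k\}$ is dense by (B2).

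The operator $T$ is then forced: the desired identity $JT_a=TJ$ together with $T_ae_0=0$ and $T_ae_n=a_ne_{n-1}$ compels $Tx_0=0$ and $Tx_n=a_nx_{n-1}$ on the dense subspace $\spann\{x_n\}$. The entire difficulty lies in showing that this densely defined map is bounded, so that it extends to some $T\in L(X)$; the intertwining relation then holds automatically, since $JT_a$ and $TJ$ agree on the basis $\{e_n\}$ and are both continuous. The naive bound $\|Tw\|\leq\sum_n|f_n(w)|\,|a_n|\leq\|w\|\sum_n\|f_n\|\,|a_n|$ is of no use here: $a$ lies only in $\ell_2$, so its $\ell_1$-norm may be infinite, and the norms $\|f_{n_k}\|$ are moreover allowed to blow up. This is exactly the point at which the Hilbertian block structure (B5) has to be exploited, and it is the main obstacle.

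The crucial step is therefore the norm estimate, carried out block by block. Writing $w=\sum_nc_nx_n$ (a finite sum) and $Tw=\sum_md_mx_m$ with $d_m=c_{m+1}a_{m+1}$, I would group the image indices into the blocks $B_k=\{n_k,\dots,n_{k+1}-1\}$, which partition $\Z_+$, and peel off the single special index $n_k$ from each block. A triangle inequality over blocks combined with (B5) applied to each interior yields
$$
\|Tw\|\leq\sum_k|d_{n_k}|+2\sum_k\Bigl(\sum_{m=n_k+1}^{n_{k+1}-1}|d_m|^2\Bigr)^{1/2}.
$$
Estimating $|c_j|\leq\|f_j\|\,\|w\|$ with $\|f_j\|\leq3$ at the regular indices and $\|f_{n_k}\|\leq b_k$ at the special ones (and noting that each index $n_k+1$ is regular, while the top interior index contributes a single special coefficient $c_{n_{k+1}}$), each resulting sum is dominated by $\|w\|$ times one of $\sum_k|a_{n_k+1}|$, $\sum_ks_k$ with $s_k=\bigl(\sum_{j=n_k+1}^{n_{k+1}-1}|a_j|^2\bigr)^{1/2}$, or $\sum_kb_k|a_{n_k}|$.

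These three sums all converge by the choice of parameters. Indeed $|a_{n_k+1}|\leq\sqrt{t_{n_k}}\leq2^{-k}$ and the block masses satisfy $s_k\leq\sqrt{t_{n_k}}\leq2^{-k}$, so the first two are summable, while $b_k|a_{n_k}|\leq2^k4^{-k}=2^{-k}$ for large $k$ makes the third summable as well. Hence $\|Tw\|\leq C\|w\|$ for a finite constant $C$ depending only on $a$ and the chosen parameters, and $T$ extends to $L(X)$, completing the construction. The heart of the matter — and the reason the $n_k$ and $b_k$ must be chosen exactly as above — is this balance: the blocks must grow fast enough that the $\ell_2$-tails of $a$ remain summable after extracting square roots, and $b_k$ must tend to infinity slowly enough that $b_k|a_{n_k}|$ stays summable.
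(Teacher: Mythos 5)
Your proof is correct and takes essentially the same approach as the paper: the same use of Proposition~\ref{biort} with parameters tuned so that $\sum_k b_k|a_{n_k}|<\infty$ and $\sum_k\bigl(\sum_{j=n_k+1}^{n_{k+1}}|a_j|^2\bigr)^{1/2}<\infty$ (which is exactly what your choices $t_{n_k}\leq 4^{-k}$, $|a_{n_k}|\leq 4^{-k}$, $b_k=\max(3,2^k)$ guarantee), the same operator $J$, and the same three-way splitting of the shift into interior-of-block terms controlled by (B5) and (B4), special inputs $x_{n_k}$ controlled by (B3), and the $x_{n_k+1}\mapsto a_{n_k+1}x_{n_k}$ terms controlled by (B4). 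The only difference is presentational: the paper assembles $T$ as an explicit sum $T_0+T_1+T_2$ of operators each proved bounded on all of $X$, whereas you define $T$ on the dense span $\spann\{x_n\}$ and extend by continuity after the block-wise norm estimate — the underlying estimates coincide.
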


\begin{proof} Since $a\in\ell_2(\N)$, we can choose
a sequence $\{n_k:k\in\Z_+\}$ of non-negative integers and a
sequence $\{b_k:k\in\Z_+\}$ of numbers in $[3,\infty)$ such that
$n_0=0$, $n_{k+1}-n_k\geq 2$ for each $k\in\Z_+$, $b_k\to\infty$ as
$k\to\infty$,
\begin{equation}
\sum_{k=0}^\infty b_k|a_{n_k}|<\infty\quad\text{and}\quad
\sum_{k=0}^\infty
\left(\sum_{j=n_k+1}^{n_{k+1}}|a_j|^2\right)^{1/2}<\infty.
\label{su2}
\end{equation}
Let $\{(x_j,f_j):j\in\Z_+\}$ be the biorthogonal system in $X\times
X^*$ furnished by Proposition~\ref{biort} with the above choice of
sequences $\{n_k\}$ and $\{b_k\}$. Since $\|x_n\|=1$ for each
$n\in\Z_+$, the formula
$$
Ju=\sum_{n=0}^\infty u_n x_n
$$
defines a bounded linear operator from $\ell_1$ to $X$. Since
$f_j(Ju)=u_j$ for any $j\in\Z_+$ and $u\in\ell_1$, we see that $J$
is injective. Since $J(e_j)=x_j$ for each $j\in\Z_+$ and
$\spann\{x_j:j\in\Z_+\}$ is dense in $X$, the operator $J$ has dense
range. For each $k\in\Z_+$, we consider $S_k\in L(X)$ defined by the
formula
$$
S_kx=\sum_{j=n_k+2}^{n_{k+1}-1} a_jf_j(x)x_{j-1},
$$
assuming that $S_k=0$ if $n_k-n_{k-1}=2$. Applying
Proposition~\ref{biort} (B5) and (B4), we have
$$
\|S_kx\|^2\leq 4\sum_{j=n_k+2}^{n_{k+1}-1} |a_j|^2|f_j(x)|^2\leq
36\|x\|^2\sum_{j=n_k+2}^{n_{k+1}-1} |a_j|^2\leq
36\|x\|^2\sum_{j=n_k+1}^{n_{k+1}}|a_j|^2.
$$
Hence
$$
\|S_k\|\leq 6\left(\sum_{j=n_k+1}^{n_{k+1}}|a_j|^2\right)^{1/2}.
$$
Thus applying the second inequality in (\ref{su2}), we see that
$\sum_{k=0}^\infty\|S_k\|<\infty$ and therefore the formula
$$
T_0x=\sum_{k=0}^\infty S_kx
$$
defines a bounded linear operator on $X$. Since by
Proposition~\ref{biort} (B3), $\|f_{n_k}\|\leq b_k$, we, using the
first inequality in (\ref{su2}), see that the formula
$$
T_1x=\sum_{k=1}^\infty a_{n_k}f_{n_k}(x) x_{n_k-1}
$$
also defines a bounded linear operator on $X$. Finally, the second
inequality in (\ref{su2}) implies that
$\sum_{k=0}^\infty|a_{n_k+1}|<\infty$. Thus, since
$\|f_{n_k+1}\|\leq 3$ according to Proposition~\ref{biort} (B4), the
formula
$$
T_2x=\sum_{k=0}^\infty a_{n_k+1}f_{n_k+1}(x) x_{n_k}
$$
defines a bounded linear operator on $X$ as well. Consider now the
operator $T=T_0+T_1+T_2\in L(X)$. It is straightforward to verify
that $Tx_0=0$ and $Tx_n=a_nx_{n-1}$ for $n\geq 1$. Taking into
account that $Je_n=x_n$ for each $n\in\Z_+$, we see that
$JT_ae_0=TJe_0=0$ and $JT_ae_n=TJe_n=a_nx_{n-1}$ for $n\geq 1$.
Hence $JT_a=TJ$.
\end{proof}

\subsection{Orbits of $I-cV$ and $(I+cV)^{-1}$ \label{s2-2}}

Let $V$ be the classical Volterra operator acting on $L_2[0,1]$:
$$
Vf(t)=\int_0^t f(s)\,ds.
$$
We need the following fact on the behavior of the orbits of $I-cV$
and $(I+cV)^{-1}$ for $c>0$. For the case $c=1$, the proof can be
found in \cite{msz} and the proof for general $c>0$ goes along the
same lines. We include the proof for sake of completeness.

\begin{lemma}\label{volterra}
Let $c>0$. Then for any $f\in L_2[0,1]$,
\begin{align}\label{volt1}
\lim_{n\to\infty}\|(I-cV)^nf\|&=0
\\
\text{and}\quad\lim_{n\to\infty}\|(I+cV)^{-n}f\|&=0.\label{volt2}
\end{align}
\end{lemma}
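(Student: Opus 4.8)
The plan is to establish both limits via the spectral and functional-calculus structure of the Volterra operator $V$, exploiting that $V$ is a compact quasinilpotent operator on $L_2[0,1]$. First I would record the basic facts: $V$ is compact, its spectrum is $\{0\}$, and hence for any polynomial $p$ the operator $p(V)$ has spectral radius $|p(0)|$. Since $I-cV = p(V)$ with $p(z)=1-cz$ satisfies $p(0)=1$, the spectral radius of $I-cV$ equals $1$, so the naive spectral-radius bound gives nothing; the decay to $0$ must come from finer quantitative estimates on the powers, not from the spectrum. The same remark applies to $(I+cV)^{-1}$, whose spectrum is also $\{1\}$.

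The approach I would take is to obtain explicit norm estimates on $(I-cV)^n$ and $(I+cV)^{-n}$ by computing or bounding the kernels of these operators. Both $I-cV$ and $(I+cV)^{-1}$ are of the form $I + (\text{Volterra-type integral operator})$, and powers of such operators are again Volterra operators with iterated kernels that can be written down explicitly in terms of the $c$ and binomial/convolution factors. Concretely, $(I-cV)^n = \sum_{j=0}^n \binom{n}{j}(-c)^j V^j$, and $V^j$ has the well-known kernel $\frac{(t-s)^{j-1}}{(j-1)!}$ for $s<t$. Summing the series produces a kernel involving a Bessel-type or exponential-type expression in $c(t-s)$. I would then estimate the operator norm (or, more easily, the Hilbert-Schmidt or $L_2\to L_2$ norm) of the resulting integral operator and show it tends to $0$ as $n\to\infty$. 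For $(I+cV)^{-n}$ one can use the Neumann series $(I+cV)^{-1}=\sum_{m=0}^\infty (-c)^m V^m$, which converges because $V$ is quasinilpotent, and then analyze its $n$-th power similarly.

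An alternative and cleaner route, which I would actually prefer, is to reduce to a dense subspace and use uniform norm bounds. Since the family $\{(I-cV)^n\}$ is not uniformly bounded in an obvious way, I would instead argue: it suffices to prove $\|(I-cV)^nf\|\to 0$ for $f$ in a dense set together with a uniform bound $\sup_n\|(I-cV)^n\|<\infty$, and then pass to general $f$ by approximation. For the dense set I would take, say, indicator-type functions or polynomials, for which the iterated-kernel computation is tractable; for the uniform bound I would use the explicit kernel estimate. The key identity to exploit is that $V$ acts as a shift-by-integration, so that on characteristic functions $\chi_{[0,a]}$ one gets controllable expressions. The same scheme handles $(I+cV)^{-n}$.

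The main obstacle I anticipate is precisely the uniform boundedness of the powers: because $\mathrm{spr}(I-cV)=1$ while $I-cV$ is not a contraction, one must rule out norm growth of $\|(I-cV)^n\|$ and simultaneously extract the pointwise decay, and these pull in opposite directions. Overcoming this requires genuinely quantitative control of the iterated Volterra kernels (their explicit asymptotics as $n\to\infty$), rather than any soft spectral argument. I would expect the heart of the proof to be an estimate showing that the kernel of $(I-cV)^n$ concentrates and decays in $L_2$ norm, likely via a generating-function computation that identifies the kernel with an expression like $e^{-c(t-s)}$ times a Laguerre-polynomial factor, whose $L_2$ mass over the triangle $\{s<t\}$ can then be shown to vanish as $n\to\infty$.
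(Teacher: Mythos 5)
Your preferred second route---convergence on a dense set plus a uniform bound on the powers---is exactly the skeleton of the paper's proof, and Laguerre polynomials do appear just as you guess: the paper verifies $(I-cV)^n\mathbf{1}(t)=L_n(ct)$, gets decay from Szeg\"o's asymptotics, and propagates it to all polynomials by writing $p=q(V)\mathbf{1}$, so that $(I-cV)^np=q(V)(I-cV)^n\mathbf{1}\to0$. The genuine gap is in the step you yourself single out as the heart of the matter: the uniform bound $\sup_n\|(I-cV)^n\|<\infty$. Every device you offer for it is an absolute-value estimate on kernels, and no such estimate can succeed. First, nothing about $(I-cV)^n$ tends to zero in any operator norm: $\sigma(I-cV)=\{1\}$, so $\|(I-cV)^n\|\geq 1$ for all $n$, which kills your first plan outright (you note the spectral radius is $1$, yet still propose to show the norm of the resulting integral operator vanishes). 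Second, your expectation that the kernel of $(I-cV)^n$, i.e.\ of $K_n:=(I-cV)^n-I$, has vanishing $L_2$ mass over the triangle $\{s<t\}$ is not merely unproved but false: since $(I-cV)^nf\to0$ for every $f$ (the statement being proved), $K_n\to -I$ strongly, and lower semicontinuity of the Hilbert--Schmidt norm under strong convergence shows $\|K_n\|_{\mathrm{HS}}$ cannot even remain bounded, else $-I$ would be a Hilbert--Schmidt operator. Concretely, the kernel of $K_n$ is $-cL^{(1)}_{n-1}(c(t-s))$ on $\{s<t\}$, and $\|K_n\|_{\mathrm{HS}}^2$ grows at least linearly in $n$. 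Consequently any bound on $\|(I-cV)^n\|=\|I+K_n\|$ obtained by estimating the kernel of $K_n$ in modulus (Hilbert--Schmidt, Schur test, etc.) must diverge: the power boundedness rests on cancellation between the identity and the kernel part, which such estimates cannot see.

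What is missing is the paper's structural input, which is not a kernel computation at all: $V+V^*=P$, the orthogonal projection onto the constants, so $\Re\langle Vf,f\rangle\geq0$ for all $f$, whence $\|(I+cV)f\|\geq\|f\|$ and $(I+cV)^{-1}$ is a contraction; moreover $I-cV=M_c(I+cV)^{-1}M_c^{-1}$ with $M_cf(t)=e^{-ct}f(t)$, so $I-cV$ is similar to a contraction and hence power bounded. This closes (\ref{volt1}), and the same similarity yields (\ref{volt2}) at once---a point your outline leaves essentially untouched, since ``analyze the $n$-th power of the Neumann series similarly'' runs into the same impossible kernel estimates. Without this accretivity-plus-similarity argument, or some genuine substitute for the uniform power bound, your proof does not close.
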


\begin{proof} First, take the function ${\bf 1}$ being identically 1.
One can easily verify that $(I-cV)^n{\bf1}(t)=L_n(ct)$, where $L_n$
are the Laguerre polynomials:
$$
L_n(t)=\sum_{k=0}^n \frac{n!(-t)^k}{(n-k)!(k!)^2}.
$$
From the well-known asymptotic formulae \cite[Chapter~8]{sz} for
Laguerre polynomials it follows that for any $0<a<b<\infty$,
$\|L_n\|_{L_\infty[0,b]}=O(1)$ and
$\|L_n\|_{L_\infty[a,b]}=O(n^{-1/2})$ as $n\to\infty$. Since
$(I-cV)^n{\bf1}(t)=L_n(ct)$, we immediately obtain $(I-cV)^n{\bf
1}\to 0$ in $L_2[0,1]$ as $n\to\infty$. Now, let $p(t)=\sum_{j=0}^n
p_jt^j$ be any polynomial. Then $p=q(V){\bf 1}$, where
$q(t)=\sum_{j=0}^n j!p_jt^j$. Therefore
$$
(I-cV)^np=(I-cV)^nq(V){\bf1}=q(V)(I-cV)^n{\bf1}\to 0\quad\text{as
$n\to\infty$}
$$
since $(I-cV)^n{\bf1}\to 0$ and $q(V)$ is a bounded linear operator
on $L_2[0,1]$. Since the space of polynomials is dense in
$L_2[0,1]$, we see that (\ref{volt1}) is satisfied for $f$ from a
dense set.

Next, $V+V^*=P$, where $P$ is the orthoprojection onto the
one-dimensional space of constant functions. Hence
$$
\Re \langle Vf,f\rangle=\frac12\langle
(V+V^*)f,f\rangle=\frac12\langle Pf,f\rangle\geq 0\quad\text{for any
$f\in L_2[0,1]$}.
$$
Thus, for any $f\in L_2[0,1]$ with $\|f\|=1$, we have
$$
\|(I+cV)f\|\geq \Re\langle f+cVf,f\rangle\geq \langle f,f\rangle =1.
$$
Hence $(I+cV)^{-1}$ is a contraction. One can also easily verify
that $I-cV=M_c(I+cV)^{-1}M_c^{-1}$, where $M_cf(t)=e^{-ct}f(t)$. It
follows that $I-cV$ and $(I+cV)^{-1}$ are similar. Therefore, the
operator $I-cV$ is power bounded since it is similar to a
contraction. Since $I-cV$ is power bounded and (\ref{volt1}) is
satisfied for $f$ from a dense set, we obtain that (\ref{volt1}) is
satisfied for all $f\in L_2[0,1]$. Finally similarity of $I-cV$ and
$(I+cV)^{-1}$ implies that (\ref{volt2}) is also satisfied for any
$f\in L_2[0,1]$.
\end{proof}

\subsection{The Kitai Criterion and quasisimilarity\label{s2-3}}

It has been noticed by many authors that for any operator satisfying
a certain cyclicity property (like being cyclic, supercyclic,
hypercyclic, satisfying the Hypercyclicity Criterion or satisfying
the Kitai Criterion), any other operator being in certain
quasisimilarity relation with the first one satisfies the same
property. The following lemma is proved in \cite{MP}.

\begin{lemma}\label{quasi}
Let $X$ and $X_0$ be Banach spaces, $T_0\in L(X_0)$ and $T\in L(X)$.
Suppose also that $T_0$ satisfies the Kitai Criterion and there
exists an injective bounded linear operator $J:X_0\to X$ with dense
range such that $JT_0=TJ$. Then $T$ satisfies the Kitai Criterion.
\end{lemma}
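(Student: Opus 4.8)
The plan is to transport the Kitai data for $T_0$ through the intertwining operator $J$ and to check that the pushed-forward data witness the Kitai Criterion for $T$. Since $T_0$ satisfies the Kitai Criterion, fix dense subsets $E_0,F_0\subseteq X_0$ and a map $S_0:F_0\to F_0$ such that $T_0S_0y=y$, $S_0^ky\to 0$ and $T_0^kx\to 0$ as $k\to\infty$ for every $y\in F_0$ and $x\in E_0$. I would take $E=J(E_0)$ and $F=J(F_0)$ as the candidate dense sets in $X$, and define $S:F\to F$ by $S(Ju)=J(S_0u)$ for $u\in F_0$. Injectivity of $J$ is exactly what makes $S$ well defined: each $y\in F$ equals $Ju$ for a \emph{unique} $u\in F_0$; and since $S_0u\in F_0$ we indeed have $Sy\in J(F_0)=F$.

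First I would check that $E$ and $F$ are dense in $X$. As $J$ is continuous, $J(\overline{E_0})\subseteq\overline{J(E_0)}=\overline{E}$; since $E_0$ is dense we have $\overline{E_0}=X_0$, so $J(X_0)\subseteq\overline{E}$, and passing to closures gives $\overline{J(X_0)}\subseteq\overline{E}$. Because $J$ has dense range, $\overline{J(X_0)}=X$, whence $E$ is dense in $X$; the identical argument applies to $F$.

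Next I would establish the two intertwining identities obtained by iteration. The hypothesis $TJ=JT_0$ yields $T^kJ=JT_0^k$ for all $k\in\Z_+$ by a one-line induction, namely $T^{k+1}J=T(T^kJ)=T(JT_0^k)=(TJ)T_0^k=JT_0^{k+1}$. Analogously, since $S_0$ maps $F_0$ into itself, $S^k(Ju)=J(S_0^ku)$ for every $u\in F_0$ and $k\in\Z_+$, again by induction: $S^{k+1}(Ju)=S(J(S_0^ku))=J(S_0(S_0^ku))=J(S_0^{k+1}u)$, where membership $S_0^ku\in F_0$ is what legitimises each step. With these identities the three Kitai conditions for $T$ follow at once. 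For $y=Ju\in F$ we get $TSy=TJ(S_0u)=JT_0(S_0u)=Ju=y$; moreover $S^ky=J(S_0^ku)\to 0$ because $S_0^ku\to 0$ and $J$ is continuous; and for $x=Jv\in E$ we have $T^kx=J(T_0^kv)\to 0$ for the same reason. Thus $E$, $F$ and $S$ witness the Kitai Criterion for $T$.

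I do not expect a serious obstacle here: the content of the lemma is precisely that the two standing hypotheses on $J$ are the ones the construction consumes. The only points requiring care are that injectivity of $J$ is exactly what renders $S$ well defined, that dense range together with continuity is exactly what delivers density of $E$ and $F$, and that one must keep the iterates $S_0^ku$ inside $F_0$ so that the inductive identity for $S$ is valid; continuity of $J$ then does the single remaining job of transferring the limits $S_0^ku\to0$ and $T_0^kv\to0$ from $X_0$ to $X$.
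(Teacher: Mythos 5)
Your proof is correct and complete: injectivity of $J$ makes $S(Ju)=J(S_0u)$ well defined, continuity plus dense range gives density of $J(E_0)$ and $J(F_0)$, and the iterated intertwining relations transfer all three Kitai conditions from $T_0$ to $T$. Note that the paper does not actually prove this lemma but cites Mart\'\i nez-Gim\'enez and Peris \cite{MP}; your pushforward argument is precisely the standard proof of such quasisimilarity-transfer statements, so there is nothing to add or correct.
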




\subsection{The Kitai Criterion for $I+T$, where $T$ is a backward
weighted shift \label{s2-4}}

\noindent{\bf Remark.} \ Let $w=\{w_n\}_{n\in\N}$ and
$u=\{u_n\}_{n\in\N}$ be two bounded sequence of non-zero numbers in
$\K$ such that $|w_n|=|u_n|$ for each $n\in\N$ and $T_w$, $T_u$ be
backward weighted shifts with weight sequences $w$ and $u$
respectively acting on $X$ being either $\ell_p$ for $1\leq
p<\infty$ or $c_0$. Then the operators $T_w$ and $T_u$ are
isometrically similar. Indeed, consider the sequence
$\{d_n\}_{n\in\Z_+}$ defined as $d_0=1$ and
$d_n=\prod_{j=1}^n\frac{w_j}{u_j}$ for $n\geq1$. Then $|d_n|=1$ for
each $n\in\Z_+$ and therefore the diagonal operator $D\in L(X)$,
which acts on the basic vectors according to the formula
$De_n=d_ne_n$ for $n\in\Z_+$, is an isometric isomorphism of $X$
onto itself. One can easily verify that $T_u=D^{-1}T_w D$. That is,
$T_w$ and $T_u$ are isometrically similar.

\medskip

For a bounded linear operator $S$ on a Banach space $X$ we denote
\begin{equation} \label{es}
{\cal E}(S)=\{x\in X:S^nx\to 0\ \ \text{as $n\to\infty$}\}.
\end{equation}
It follows immediately from Definition~2 that an invertible $S\in
L(X)$ satisfies the Kitai Criterion if and only if both ${\cal
E}(S)$ and ${\cal E}(S^{-1})$ are dense in $X$.

\begin{lemma}\label{l-kit} Let $\lambda\in\K\setminus\{0\}$ and $T$ be
the weighted backward shift on $\ell_1$ with the weight sequence
$w_n=1/n$, $n\in\N$. Then the operator $I+\lambda T$ satisfies the
Kitai criterion.
\end{lemma}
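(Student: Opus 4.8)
The plan is to use the characterization stated just before this lemma. Since the weights $1/n$ make $T$ quasinilpotent (its spectral radius is $\lim_n(n!)^{-1/n}=0$), the operator $I+\lambda T$ is invertible, and it suffices to prove that both $\E(I+\lambda T)$ and $\E((I+\lambda T)^{-1})$ are dense in $\ell_1$. I shall obtain these two densities by transporting the decay statements of Lemma~\ref{volterra} from $L_2[0,1]$ to $\ell_1$ through an explicit intertwining with the Volterra operator $V$. First I would reduce to the case of a positive real multiplier: by the Remark opening this subsection, the shifts with weights $\lambda/n$ and $|\lambda|/n$ are isometrically similar, so $I+\lambda T$ is conjugate by an isometric isomorphism to $I+|\lambda|T$; as such a conjugation carries $\E(\cdot)$ onto $\E(\cdot)$ homeomorphically, it preserves the Kitai Criterion, and it is enough to treat $I+sT$ with $s>0$.

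The heart of the matter, and the step I expect to be the main obstacle, is the construction of a bounded intertwining map. I would fix $\mu>1$ and define $J:L_2[0,1]\to\ell_1$ by $(Jf)_n=\langle f,\eta_n\rangle$, where $\eta_n(t)=\mu^{-n}(1-t)^n$. Because $\|\eta_n\|_2=\mu^{-n}(2n+1)^{-1/2}$ and $\mu>1$, the estimate $|\langle f,\eta_n\rangle|\le\|f\|_2\|\eta_n\|_2$ gives $\sum_n|(Jf)_n|<\infty$, so $J$ maps boundedly into $\ell_1$; here the choice $\mu>1$ is exactly what makes the series summable. A short computation with $V^*g(t)=\int_t^1 g(s)\,ds$ yields $V^*\eta_n=\frac{\mu}{n+1}\eta_{n+1}$, whence for every $f\in L_2[0,1]$
$$
(JVf)_n=\langle f,V^*\eta_n\rangle=\frac{\mu}{n+1}\langle f,\eta_{n+1}\rangle=(\mu TJf)_n,
$$
that is, $JV=\mu TJ$. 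Injectivity of $J$ holds because $Jf=0$ forces every moment of $f$ to vanish, and $J$ has dense range because a sequence $(\beta_n)\in\ell_\infty=\ell_1^*$ annihilating the range gives $\sum_n\bar\beta_n\eta_n=0$, a vanishing power series in $1-t$, hence $\beta_n=0$ for all $n$.

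It then remains to harvest the two densities. From $JV=\mu TJ$ one gets $J(I-cV)=(I-c\mu T)J$ and $J(I+cV)=(I+c\mu T)J$ for every $c>0$; I write $s=c\mu$. Using $\|(I-sT)^nJf\|=\|J(I-cV)^nf\|\le\|J\|\,\|(I-cV)^nf\|\to0$ by \eqref{volt1}, together with the density of the range of $J$, shows that $\E(I-sT)$ is dense; likewise \eqref{volt2} and $J(I+cV)^{-1}=(I+sT)^{-1}J$ give density of $\E((I+sT)^{-1})$. Finally the diagonal isometry $De_n=(-1)^ne_n$ satisfies $DTD=-T$, so $I+sT=D(I-sT)D$ and therefore $\E(I+sT)=D\,\E(I-sT)$ is dense as well. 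Since $c=s/\mu>0$ realizes any prescribed $s>0$, both $\E(I+sT)$ and $\E((I+sT)^{-1})$ are dense, and by the characterization $I+sT$, and hence $I+\lambda T$, satisfies the Kitai Criterion.
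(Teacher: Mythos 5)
Your proof is correct and follows essentially the same route as the paper: the operator $J$ you construct (the paper takes $\mu=2$) is precisely the adjoint $J^*$ of the map $c_0\to L_2[0,1]$, $e_n\mapsto (1-t)^n/2^n$, used there, and both arguments rest on the same Volterra intertwining, on Lemma~\ref{volterra}, and on the sign-flip isometric similarity to pass between $I-sT$ and $I+sT$, concluding via the density of $\E(\cdot)$ for the operator and its inverse. The only difference is presentational: you build the intertwiner directly and check injectivity and dense range by a moment argument and Hahn--Banach plus power-series uniqueness, whereas the paper obtains the same facts by dualizing a map defined on $c_0$ (via $*$-weak continuity and $J_0=J^{**}$).
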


\begin{proof} Let $c=|\lambda|/2$. Since $T$ is quasinilpotent, the
operator $I+\lambda T$ is invertible and therefore it suffices to
prove that ${\cal E}(I+\lambda T)$ and ${\cal E}((I+\lambda
T)^{-1})$ are dense in $\ell_1$. According to the above remark the
operators $\lambda T$, $2cT$ and $-2cT$ are isometrically similar.
Hence $I+\lambda T$ is similar to $I-2cT$ and $(I+\lambda T)^{-1}$
is similar to $(I+2cT)^{-1}$. Thus, it suffices to prove that
\begin{align}
&\text{${\cal E}(I-2cT)$ is dense in $\ell_1$} \label{den1}
\\
\text{and}\quad&\text{${\cal E}((I+2cT)^{-1})$ is dense in
$\ell_1$.}\label{den2}
\end{align}

Consider the bounded linear operator $J_0:\ell_\infty\to L_2[0,1]$
defined by the formula
$$
J_0x(t)=\sum_{n=0}^\infty \frac{x_n(1-t)^n}{2^n}.
$$
Naturally identifying $\ell_\infty$ with $\ell_1^*$ and
$(L_2[0,1])^*$ with $L_2[0,1]$, one can easily see that $J_0$ is
$*$-weakly continuous and therefore $J_0=J^{**}$, where
$J=J_0\bigr|_{c_0}$. Uniqueness theorem for analytic functions
implies that $J_0=J^{**}$ is injective and therefore
$J^*:L_2[0,1]\to \ell_1$ has dense range. Clearly $J(c_0)$ contains
all polynomials and therefore is dense in $L_2[0,1]$. Hence $J^*$ is
injective. A direct calculation shows that the dual of the Volterra
operator $V$ on $L_2[0,1]$ acts according to the formula
$$
V^*f(t)=\int_t^1 f(s)\,ds.
$$
Consider also the weighted forward shift $S\in L(c_0)$,
$Se_n=e_{n+1}/(n+1)$ for $n\in\Z_+$, where $\{e_n\}_{n\in\Z_+}$ is
the canonical basis of $c_0$. We shall verify that
\begin{equation}
\label{sim} 2JS=V^*J.
\end{equation}
Indeed, for any $n\in\Z_+$,
\begin{align*}
(2JSe_n)(t)&=\frac{2(Je_{n+1})(t)}{n+1}=\frac{2(1-t)^{n+1}}{(n+1)2^{n+1}}=
\frac{(1-t)^{n+1}}{(n+1)2^{n}}
\\
\text{and}\quad(V^*Je_n)(t)&=\int_{t}^1
\frac{(1-s)^n}{2^n}\,ds=\frac{(1-t)^{n+1}}{(n+1)2^{n}}.
\end{align*}
Thus, $2JSe_n=V^*Je_n$ for each $n\in\Z_+$ and (\ref{sim}) follows.
Taking the adjoint of the both sides of the equality (\ref{sim}) and
taking into account that $S^*=T$, we obtain $2TJ^*=J^*V$, which
immediately implies
\begin{equation}
\label{sim1}
(I-2cT)J^*=J^*(I-cV)\quad\text{and}\quad(I+2cT)^{-1}J^*=J^*(I+cV)^{-1}.
\end{equation}
Applying Lemma~\ref{volterra}, we see that for any $f\in L_2[0,1]$,
$$
(I-2cT)^n J^*f=J^*(I-cV)^nf\to 0\quad \text{and} \quad (I+2cT)^{-n}
J^*f=J^*(I+cV)^{-n}f\to 0\ \ \text{as $n\to\infty$}.
$$
Hence each of the spaces ${\cal E}(I-2cT)$ and ${\cal
E}((I+2cT)^{-1})$ contains $J^*(L_2[0,1])$, which is dense in
$\ell_1$. Thus, (\ref{den1}) and (\ref{den2}) hold and therefore
$I+\lambda T$ satisfies the Kitai Criterion. \end{proof}

\subsection{Proof of Theorem~\ref{bws1} \label{s2-5}}

Condition (\ref{1/n}) implies that there exists $c>0$ for which the
sequence
$$
d_0=1,\quad d_n=\prod_{j=1}^n\frac{c}{nw_n}\quad\text{for}\ \ n\in\N
$$
is bounded. Let $S$ be the weighted backward shift on $\ell_1$ with
the weight sequence $w_n=1/n$, $n\in\N$, $\{e_n\}_{n\in\Z_+}$ be the
canonical basis in $X$ and $\{e'_n\}_{n\in\Z_+}$ be the canonical
basis in $\ell_1$. Since $\{d_n\}_{n\in\Z_+}$ is bounded there
exists a unique bounded linear operator $J:\ell_1\to X$ such that
$Je'_n=d_ne_n$ for $n\in\Z_+$. Since $d_n\neq 0$ for each
$n\in\Z_+$, the operator $J$ is injective and has dense range. Using
the definition of $d_n$, one can easily verify that $TJ=cJS$ and
therefore $(I+T)J=J(I+cS)$. By Lemma~\ref{l-kit} $I+cS$ satisfies
the Kitai Criterion. From Lemma~\ref{quasi} it follows now that
$I+T$ also satisfies the Kitai Criterion.

\subsection{Proof of Theorem~\ref{kitai} \label{s2-6}}

Let $S$ be the weighted backward shift on $\ell_1$ with the weight
sequence $w_n=1/n$, $n\in\N$. By Lemma~\ref{wshi}, there exist $T\in
L(X)$ and an injective bounded linear operator $J:\ell_1\to X$ with
dense range such that $JS=TJ$. Hence $J(I+S)=(I+T)J$. By
Lemma~\ref{l-kit} $I+S$ satisfies the Kitai Criterion. Now
Lemma~\ref{quasi} implies that $I+T$ also satisfies the Kitai
Criterion. The proof is complete.

\section{Concluding remarks \label{s3}}\rm

The operator, we construct in the proof of Theorem~\ref{kitai}, is
the sum of the identity operator and a compact quasinilpotent
operator. In particular, its spectrum is the one-point set $\{1\}$.
It is worth noting that there are Banach spaces, where one can not
expect anything else. Namely, if $X$ is a hereditarily
indecomposable Banach space \cite{HI}, then the spectrum of any
hypercyclic operator $T$ on $X$ is a one-point set $\{\lambda\}$
with $|\lambda|=1$, see, for instance, \cite{gri}.

The sufficient condition of frequent hypercyclicity \cite{frhy}, an
interesting concept recently introduced by Bayart and Grivaux, is
related to a stronger form of the Kitai Criterion. Let us say that a
bounded linear operator $T$ on a Banach space $X$ satisfies the {\it
strong Kitai Criterion} if the spaces $E$ and $F$ in Definition~2
may be chosen to be the same space: $E=F$. Clearly, an invertible
operator satisfies the strong Kitai Criterion if and only if
$\E(T)\cap \E(T^{-1})$ is dense in $X$. We shall immediately see
that this condition is much more restrictive than the Kitai
Criterion.

\begin{lemma} \label{qn} Let $T$ be a bounded linear operator on a
complex Banach space $X$ and $\sigma(T)=\{1\}$. Then $\E(T)\cap
\E(T^{-1})=\{0\}$.
\end{lemma}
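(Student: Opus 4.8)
The plan is to linearise the two-sided dynamics via the holomorphic functional calculus. Since $\sigma(T)=\{1\}$ and $X$ is complex, the principal branch of the logarithm is analytic on a simply connected neighbourhood of $\{1\}$, so I would set $Q=\log T\in L(X)$. Then $\sigma(Q)=\{0\}$, i.e.\ $Q$ is quasinilpotent, and $e^{Q}=T$, whence $T^{n}=e^{nQ}$ for every $n\in\Z$. Fix $x\in\E(T)\cap\E(T^{-1})$ and an arbitrary $f\in X^{*}$, and consider the scalar function $\psi(\lambda)=f(e^{\lambda Q}x)=\sum_{k=0}^{\infty}\frac{f(Q^{k}x)}{k!}\lambda^{k}$.

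The first step is to show that $\psi$ is entire of exponential type $0$. Since $Q$ is quasinilpotent we have $\|Q^{k}\|^{1/k}\to0$, and estimating the Taylor coefficients $c_k=f(Q^kx)/k!$ by $|c_k|\le\|f\|\,\|x\|\,\|Q^{k}\|/k!$ yields, for every $\epsilon>0$, a constant $C_{\epsilon}$ with $|\psi(\lambda)|\le C_{\epsilon}e^{\epsilon|\lambda|}$ for all $\lambda\in\C$. The second step feeds in the hypothesis: because $\psi(n)=f(T^{n}x)$, the assumption $x\in\E(T)\cap\E(T^{-1})$ gives $\psi(n)\to0$ both as $n\to+\infty$ and as $n\to-\infty$, so in particular $\{\psi(n):n\in\Z\}$ is bounded.

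Thus $\psi$ is an entire function of exponential type strictly less than $\pi$ that is bounded on $\Z$. By a classical interpolation estimate of Plancherel--P\'olya type (available precisely because the type is $<\pi$), such a $\psi$ is bounded on the whole real line, with $\sup_{t\in\R}|\psi(t)|$ controlled by $\sup_{n}|\psi(n)|$. Being of exponential type $0$ and bounded on $\R$, Bernstein's inequality forces $\psi'\equiv0$ on $\R$, so $\psi$ is constant; this constant equals $\lim_{n\to\infty}\psi(n)=0$. Hence $\psi\equiv0$, and in particular $\psi(0)=f(x)=0$. Since $f\in X^{*}$ was arbitrary, $x=0$, which is what we want.

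The main obstacle is the complex-analytic step: upgrading the crude information ``$\psi$ is bounded on $\Z$ and of exponential type $0$'' to ``$\psi\equiv0$''. The delicate point is that quasinilpotence only yields minimal (type $0$) growth and no quantitative rate for $\|Q^{k}x\|$, so the order of $\psi$ may be as large as $1$; this is exactly why the type-$<\pi$ threshold is indispensable, both for the interpolation estimate that promotes boundedness on $\Z$ to boundedness on $\R$ and for the concluding Bernstein/Liouville argument. An alternative that avoids the logarithm is to use the boundedness of the two-sided orbit $\{T^{n}x\}$ to bound the resolvent by $\|(zI-T)^{-1}x\|\le M/\bigl||z|-1\bigr|$ off the unit circle and then analyse the singularity of $f((zI-T)^{-1}x)$ at $z=1$; however, this route runs into the same kind of Phragm\'en--Lindel\"of estimate near the imaginary directions and seems less transparent than the exponential-type argument above.
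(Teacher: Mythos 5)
Your proof is correct, but it follows a genuinely different route from the paper's. The paper renorms the space $Y=\E(T)\cap\E(T^{-1})$ by $\|y\|_Y=\max\{\|T^ny\|:n\in\Z\}$, observes that $T$ restricts to an invertible isometry $A$ on the Banach space $Y$, transfers the resolvent series $(1-z)^{-1}\sum_k(1-z)^{-k}(T-I)^k$ (convergent because $T-I$ is quasinilpotent) to $Y$ to get $\sigma(A)=\{1\}$, and then invokes Gel'fand's theorem (an invertible, doubly power-bounded element with spectrum $\{1\}$ is the identity) to conclude $A=I$; since a fixed vector of $T$ lying in $\E(T)$ must vanish, $Y=\{0\}$. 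You instead take $Q=\log T$ by the holomorphic functional calculus, form the scalar entire function $\psi(\lambda)=f(e^{\lambda Q}x)$ of exponential type $0$, and run the classical chain Cartwright (bounded on $\Z$ and type $<\pi$ implies bounded on $\R$) followed by Bernstein (type $0$ and bounded on $\R$ implies constant), identifying the constant as $0$ from $\psi(n)\to0$; Hahn--Banach then gives $x=0$. In effect you have inlined, at the level of a single vector and functional, the complex-analytic proof of the very Gel'fand-type theorem that the paper cites as a black box. What the paper's route buys is softness: an elementary renorming trick plus one cited classical theorem, with the quasinilpotence of $T-I$ used only to compute $\sigma(A)$. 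What your route buys is that it avoids introducing the auxiliary Banach space $Y$ (and checking its completeness and the isometry property) and it actually yields a slightly stronger statement: you only use that the two-sided orbit of $x$ is bounded and that $f(T^nx)\to0$ as $n\to+\infty$ for each $f\in X^*$, i.e.\ weak convergence in one direction suffices. The cost is reliance on two nontrivial pieces of entire function theory (Cartwright's interpolation theorem and Bernstein's inequality), where the type threshold $\pi$ is essential, as you correctly emphasize.
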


\begin{proof} Let $Y$ be the space $\E(T)\cap
\E(T^{-1})$ endowed with the norm
$\ssub{\|y\|}{Y}=\max\{\|T^ny\|:n\in\Z\}$. It is straightforward to
see that $Y$ is a Banach space, $T(Y)\subseteq Y$ and the
restriction $A=T\bigr|_{Y}$, considered as an operator acting on the
Banach space $Y$ is an invertible isometry. Now, if
$z\in\C\setminus\{1\}$, then, using the fact that $T-I$ is
quasinilpotent, we see that $(T-zI)^{-1}=(1-z)^{-1}\sum_{k=0}^\infty
(1-z)^k (T-I)^k$, where the series in the right-hand side is
operator norm absolutely convergent. Note that if $S$ is a bounded
linear operator on $X$ such that $ST=TS$, then $S(Y)\subseteq Y$ and
the restriction of $S$ to $Y$, considered as an operator on $Y$, has
the operator norm not exceeding $\|S\|$. Hence, the series
$(1-z)^{-1}\sum_{k=0}^\infty (1-z)^k (A-I)^k$ is operator norm
absolutely convergent to $(A-zI)^{-1}$. Thus, $z\notin\sigma(A)$.
Hence, $\sigma(A)=\{1\}$. Since $A$ is an isometry, we have that
$\{\|A\|^n\}_{n\in\Z}$ is bounded. A classical theorem due to
Gel'fand asserts that an invertible element $x$ of a unital Banach
algebra is the identity if $\sigma(x)=\{1\}$ and
$\{\|x^n\|\}_{n\in\Z}$ is bounded. Thus, $A=I$ and therefore $Ty=y$
for any $y\in Y$. From the definition of $Y$ it follows that $Ty=y$
for $y\in Y$ happens if and only if $y=0$. Thus, $Y=\{0\}$.
\end{proof}

\begin{proposition}\label{skc}
Let $X$ be a hereditarily indecomposable Banach space. Then there is
no bounded linear operators on $X$ satisfying the strong Kitai
Criterion. \end{proposition}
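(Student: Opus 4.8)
The plan is to argue by contradiction, reducing everything to Lemma~\ref{qn}, which already contains the essential work. Suppose some $T\in L(X)$ satisfies the strong Kitai Criterion. Since the strong Kitai Criterion is merely the Kitai Criterion with the additional demand $E=F$, such a $T$ a fortiori satisfies the ordinary Kitai Criterion. By the facts recalled in Section~\ref{s1} (see \cite{gri,CS}), the Kitai Criterion implies that $T$ is mixing, and mixing implies hereditarily hypercyclic, hence hypercyclic. Thus the strong Kitai Criterion forces $T$ to be a hypercyclic operator on the hereditarily indecomposable space $X$.

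Now I would invoke the structural fact cited in Section~\ref{s3}: on a hereditarily indecomposable Banach space, the spectrum of any hypercyclic operator is a singleton $\{\lambda\}$ with $|\lambda|=1$. In particular $0\notin\sigma(T)$, so $T$ is invertible, and I may legitimately use the characterization recorded just before Lemma~\ref{l-kit}: an invertible operator satisfies the strong Kitai Criterion if and only if $\E(T)\cap\E(T^{-1})$ is dense in $X$. Hence $\E(T)\cap\E(T^{-1})$ is dense. The next step is to rescale so that the spectrum becomes $\{1\}$, in order to match the hypothesis of Lemma~\ref{qn}. Set $R=\lambda^{-1}T$, so that $\sigma(R)=\lambda^{-1}\sigma(T)=\{1\}$. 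Because $|\lambda|=1$, one has $\|R^nx\|=|\lambda|^{-n}\|T^nx\|=\|T^nx\|$ for every $x\in X$ and every $n\in\Z$, whence $\E(R)=\E(T)$ and, applying the same computation to $R^{-1}=\lambda T^{-1}$, also $\E(R^{-1})=\E(T^{-1})$. Consequently $\E(R)\cap\E(R^{-1})=\E(T)\cap\E(T^{-1})$ is dense in $X$.

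On the other hand, $R$ satisfies the hypotheses of Lemma~\ref{qn}, since $\sigma(R)=\{1\}$, so that lemma yields $\E(R)\cap\E(R^{-1})=\{0\}$. As $X$ is infinite dimensional, $\{0\}$ is not dense, and this contradicts the density obtained above; therefore no operator on $X$ can satisfy the strong Kitai Criterion. I do not expect a serious obstacle here, as the heavy lifting is done by Lemma~\ref{qn}; the only points requiring care are first establishing invertibility (guaranteed by $0\notin\sigma(T)$) before speaking of $\E(T^{-1})$, and then verifying that the unimodular rescaling $T\mapsto\lambda^{-1}T$ genuinely preserves the relevant $\E$ sets, which is exactly where $|\lambda|=1$ is used. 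The one genuine subtlety is that Lemma~\ref{qn} is stated for complex spaces, so in the real case I would reduce to the complex setting by complexification, noting that the cited spectral constraint and the isometry norm argument transfer to the complexified operator.
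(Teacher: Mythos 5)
Your proof is correct and follows essentially the same route as the paper's: reduce the real case by complexification, use the hereditarily indecomposable structure to force $\sigma(T)=\{\lambda\}$ with $|\lambda|=1$ (hence invertibility), rescale by $\lambda^{-1}$ to get spectrum $\{1\}$, and contradict Lemma~\ref{qn} via the density of $\E(T)\cap\E(T^{-1})$. The only cosmetic difference is that you verify $\E(\lambda^{-1}T)=\E(T)$ by the unimodular norm computation, whereas the paper simply notes that $\lambda^{-1}T$ again satisfies the strong Kitai Criterion; these amount to the same observation.
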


\begin{proof} The case $\K=\R$
reduces to the case $\K=\C$ by passing to the complexification.
Thus, we can assume that $\K=\C$. Let $S$ be a bounded linear
operator on $X$, satisfying the strong Kitai Criterion. As we have
already mentioned, the fact that $X$ is hereditarily indecomposable
implies that $\sigma(S)$ is a one-point set $\{\lambda\}$ with
$|\lambda|=1$. Then $T=\lambda^{-1}S$ satisfies the strong Kitai
Criterion and $\sigma(T)=\{1\}$. Since $T$ is invertible and
satisfies the strong Kitai Criterion, we have that $\E(T)\cap
\E(T^{-1})$ is dense in $X$, which is not possible according to
Lemma~\ref{qn}. \end{proof}

It is also worth noting that Theorem~\ref{bws1} is surprisingly
sharp. The following observation is due to Atzmon, see
\cite{atz1,atz2}.

\begin{thma}Let $k\in\N$ and $T$ be a bounded linear operator on a Banach
space $X$ such that $\|T^n\|^{1/n}=o(1/n)$ as $n\to\infty$. Then for
$x\in X$, $\|(I+T)^nx\|=O(n^k)$ as $n\to\infty$ if and only if
$T^kx=0$.
\end{thma}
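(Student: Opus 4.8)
The plan is to prove the two implications of the equivalence separately, with all the substance lying in the necessity of $T^kx=0$; the hypothesis $\|T^n\|^{1/n}=o(1/n)$ will be used only there, and it is exactly what pins down the sharp exponent. For the easy direction, assume $T^kx=0$, so that $T^jx=0$ for every $j\geq k$. Then the binomial expansion $(I+T)^nx=\sum_{j=0}^{n}\binom{n}{j}T^jx$ truncates to $\sum_{j=0}^{k-1}\binom{n}{j}T^jx$, and since each $\binom{n}{j}$ grows like $n^{j}$ the dominant term is $\binom{n}{k-1}T^{k-1}x$, whence $\|(I+T)^nx\|=O(n^{k})$. This half uses nothing about the decay of $\|T^n\|$.

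For the converse I would pass to a scalar analytic reformulation by Newton interpolation. Fix $f\in X^*$ and set $\phi_f(z)=\sum_{m=0}^{\infty}\binom{z}{m}f(T^mx)$, where $\binom{z}{m}=z(z-1)\cdots(z-m+1)/m!$. Since $\|T^mx\|\leq\|T^m\|\,\|x\|$ and the hypothesis $\|T^m\|^{1/m}=o(1/m)$ yields $\|T^m\|\leq(\eta_m/m)^m$ with $\eta_m\to0$, the coefficients decay super-exponentially and $\phi_f$ is entire. At a nonnegative integer $n$ the binomial coefficients vanish for $m>n$, so $\phi_f(n)=f\bigl(\sum_{m=0}^{n}\binom{n}{m}T^mx\bigr)=f((I+T)^nx)$, and the standing hypothesis gives the polynomial bound $|\phi_f(n)|\leq\|f\|\,\|(I+T)^nx\|=O(n^{k})$ along $\Z_+$. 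Moreover the $m$-th forward difference of a Newton series recovers its coefficient, $\Delta^m\phi_f(0)=f(T^mx)$, so that $\phi_f$ is a polynomial of degree at most $k-1$ precisely when $f(T^mx)=0$ for all $m\geq k$.

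The heart of the argument is a growth estimate for $\phi_f$ combined with a Carlson–Phragmén–Lindelöf uniqueness theorem. Estimating $|\phi_f(z)|\leq\|f\|\,\|x\|\sum_m\bigl|\binom{z}{m}\bigr|(\eta_m/m)^m$ and optimising over $m\approx|z|^{1/2}$ produces, for each $\epsilon>0$, a bound of the form $|\phi_f(z)|=O_\epsilon\bigl(\exp(c\sqrt{\epsilon}\,|z|^{1/2})\bigr)$; letting $\epsilon\to0$ — this is exactly where the strict decay $o(1/n)$, rather than merely $O(1/n)$, is consumed — shows that $\phi_f$ has order $1/2$ and minimal type, i.e.\ $\log M(r)=o(r^{1/2})$. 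I would then invoke the fact that an entire function of order $1/2$ and minimal type which is $O(n^{k})$ along $\Z_+$ must be a polynomial of degree at most $k-1$. This forces $f(T^mx)=0$ for all $m\geq k$; since $f\in X^*$ is arbitrary we get $T^mx=0$ for $m\geq k$, and in particular $T^kx=0$.

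I expect the last, complex-analytic step to be the main obstacle, and it is where the sharpness of Theorem~\ref{bws1} is reflected. The routine verifications — entirety of $\phi_f$, the value identity $\phi_f(n)=f((I+T)^nx)$, and the Newton-coefficient formula $\Delta^m\phi_f(0)=f(T^mx)$ — are mechanical; what requires care is making the Phragmén–Lindelöf estimate quantitatively correct and reading off the exact degree. The role of the strict hypothesis is precisely to upgrade the normal (finite) type that one gets from $O(1/n)$ to minimal type: at the borderline weight $w_n\sim c/n$ the interpolating functions $\phi_f$ have order $1/2$ and positive type, the operator $I+T$ is in fact hypercyclic, and no polynomial conclusion is available, whereas the passage to zero type under $o(1/n)$ terminates the Newton series and, matched against the $O(n^{k})$ bound, yields the optimal conclusion $T^kx=0$.
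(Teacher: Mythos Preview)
The paper does not prove Theorem~A: it is quoted as an observation of Atzmon with references \cite{atz1,atz2}, and is used only to deduce Corollary~\ref{antikit}. There is thus no proof in the paper to compare your argument against. Your strategy via the Newton series $\phi_f(z)=\sum_{m\geq 0}\binom{z}{m}f(T^mx)$ together with a Carlson--Phragm\'en--Lindel\"of uniqueness principle is the natural route to results of this type, and the preparatory steps you list (entirety of $\phi_f$, the identity $\phi_f(n)=f((I+T)^nx)$, the growth bound $\log M_{\phi_f}(r)=o(\sqrt r)$ coming from $\|T^m\|^{1/m}=o(1/m)$) are all sound.

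There is, however, a genuine gap at exactly the point you flag as the obstacle. Your key claim, that an entire function of order at most $1/2$ and minimal type which is $O(n^{k})$ on $\Z_+$ must be a polynomial of degree at most $k-1$, is false: the polynomial $z^{k}$ already violates it. The correct conclusion from such a uniqueness theorem is degree at most $k$, which in your set-up yields only $f(T^{m}x)=0$ for $m\geq k+1$, hence $T^{k+1}x=0$, not $T^{k}x=0$. In fact Theorem~A as printed carries the same off-by-one: take $X=\K^{2}$, $Te_1=0$, $Te_2=e_1$; then $\|T^{n}\|^{1/n}=0$ for $n\geq 2$, while $(I+T)^{n}e_2=e_2+ne_1$ gives $\|(I+T)^{n}e_2\|=O(n)$ with $Te_2\neq 0$, contradicting the stated equivalence for $k=1$. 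The version your method actually proves, and the one the paper's application ($\mathcal E(I+T)=\{0\}$) actually uses, is the shifted statement $\|(I+T)^{n}x\|=O(n^{k-1})\iff T^{k}x=0$; with $k=1$ this reads $\|(I+T)^{n}x\|=O(1)\Rightarrow Tx=0$, which is exactly what is invoked before Corollary~\ref{antikit}.
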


From Theorem~A it immediately follows that ${\cal E}(I+T)=\{0\}$ if
$\|T^n\|^{1/n}=o(1/n)$ as $n\to\infty$. Indeed, if $x\in {\cal
E}(I+T)$ then by Theorem~A we have $Tx=0$ and therefore $(I+T)^nx=x$
for each $n\in\Z_+$. Taking into account that ${\cal E}(S)$ is dense
for each operator $S$ satisfying the Kitai Criterion, we have the
following corollary.

\begin{corollary}\label{antikit} Let $T$ be a bounded linear
operator on a Banach space $X$ such that $\|T^n\|^{1/n}=o(1/n)$ as
$n\to\infty$. Then $I+T$ does not satisfy the Kitai Criterion.
\end{corollary}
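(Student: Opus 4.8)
The plan is to deduce everything from Theorem~A together with the elementary observation that any operator satisfying the Kitai Criterion has a dense set of vectors whose orbit tends to zero. Concretely, if $S\in L(X)$ satisfies the Kitai Criterion, then the dense set $E$ furnished by Definition~2 satisfies $S^kx\to 0$ for every $x\in E$, so $E\subseteq {\cal E}(S)$ and hence ${\cal E}(S)$ is dense in $X$. Applying this with $S=I+T$, it suffices to prove that ${\cal E}(I+T)=\{0\}$, since $\{0\}$ is not dense in $X$ (the degenerate case $X=\{0\}$ being vacuous).

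To establish ${\cal E}(I+T)=\{0\}$ I would invoke Theorem~A with $k=1$. Take any $x\in {\cal E}(I+T)$, so that $(I+T)^nx\to 0$ as $n\to\infty$. Then the orbit $\{(I+T)^nx\}$ is bounded, whence $\|(I+T)^nx\|=O(n)$ as $n\to\infty$. By Theorem~A this growth bound is equivalent to $Tx=0$, which gives $(I+T)x=x$ and, by iteration, $(I+T)^nx=x$ for every $n\in\Z_+$. Letting $n\to\infty$ and using $(I+T)^nx\to 0$ then forces $x=0$.

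There is no genuine obstacle to overcome here: Atzmon's Theorem~A carries the entire weight of the argument, converting the purely metric information that the orbit of $x$ under $I+T$ is bounded (indeed null) into the algebraic conclusion $Tx=0$, after which the orbit collapses to the constant sequence $x$ and hence to $0$. The only minor points are the inclusion $E\subseteq {\cal E}(I+T)$ and the remark that a single-point set cannot be dense in a nontrivial space. Once it is seen that ${\cal E}(I+T)=\{0\}$ fails to be dense, the density of ${\cal E}(I+T)$ demanded by the Kitai Criterion is contradicted, so $I+T$ cannot satisfy it.
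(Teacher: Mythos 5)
Your proof is correct and is essentially identical to the paper's own argument: both deduce from Theorem~A (with $k=1$, since a null orbit is in particular $O(n)$) that ${\cal E}(I+T)=\{0\}$, and then use the observation that the dense set $E$ in the Kitai Criterion is contained in ${\cal E}(I+T)$, yielding a contradiction.
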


From Corollary~\ref{antikit} it follows, in particular, that if $T$
is a weighted backward shift on $\ell_p$ for $1\leq p<\infty$ or on
$c_0$ with the weight sequence $\{w_n\}$ satisfying $w_n=o(n^{-1})$
as $n\to\infty$, then $I+T$ does not satisfy the Kitai Criterion.

It is well-known and easy to see that the spectrum of a backward
weighted shift $T$ is always the disk $\{z\in\C:|z|\leq r\}$, where
$r\geq 0$ is the spectral radius of $T$. As it easily follows from
the results of Chan and Shapiro \cite{chan}, $I+T$ satisfies the
Kitai Criterion if $r>0$. From this point of view Theorem~\ref{bws1}
gives a much more subtle sufficient condition for such operators to
satisfy the Kitai Criterion. In particular, it shows that there are
many quasinilpotent backward weighted shifts $T$ for which $I+T$
satisfies the Kitai Criterion.

Finally observe that the Kitai Criterion makes sense for operators
on arbitrary topological vector spaces. A natural question arises in
relation with Theorem~\ref{kitai}. Namely, does there exist a
continuous linear operator satisfying the Kitai Criterion on any
separable infinite dimensional Fr\'echet space?

\bigskip

{\bf Acknowledgements.} \ The author is grateful to Professors
Sophie Grivaux and Charles Read for their interest and helpful
comments.

\small\rm

\vskip1truecm

\scshape

\noindent Stanislav Shkarin

\noindent Queen's University Belfast

\noindent Department of Pure Mathematics

\noindent University road, BT7 1NN \ Belfast, UK

\noindent E-mail address: \qquad {\tt s.shkarin@qub.ac.uk}

\end{document}